\numberwithin{equation}{section}
\newtheorem{theorem}{Theorem}[section]
\newtheorem{lem}{Lemma}[section]
\newtheorem{pro}{Proposition}[section]
\newtheorem{cor}{Corollary}[section]
\newtheorem{rem}{Remark}[section]
\newtheorem{rems}{Remarks}[section]
\newtheorem{ex}{Example}[section]
\newtheorem{defi}{Definition}[section]
\newtheorem{hyp}{Assumption}[section]
\newtheorem{con}{Conjecture}[section]
\newcommand{\bt}{\begin{theorem}}
\newcommand{\et}{\end{theorem}}
\newcommand{\bl}{\begin{lem}}
\newcommand{\el}{\end{lem}}
\newcommand{\bp}{\begin{pro}}
\newcommand{\ep}{\end{pro}}
\newcommand{\bcor}{\begin{cor}}
\newcommand{\ecor}{\end{cor}}
\newcommand{\bcon }{\begin{con} \rm }
\newcommand{\econ }{\end{con}}
\newcommand{\lab }{\label }
\newcommand{\bd}{\begin{defi} \rm }
\newcommand{\ed}{\end{defi}}
\newcommand{\brem }{\begin{rem} \rm }
\newcommand{\erem }{\end{rem}}
\newcommand{\brems }{\begin{rems} \rm }
\newcommand{\erems }{\end{rems}}
\newcommand{\bhyp }{\begin{hyp} \rm }
\newcommand{\ehyp }{\end{hyp}}
\newcommand{\bex}{\begin{ex} \rm }
\newcommand{\eex}{\end{ex}}
\newcommand{\be}{\begin{equation}}
\newcommand{\ee}{\end{equation}}
\newcommand{\bde}{\begin{displaymath}}
\newcommand{\ede}{\end{displaymath}}
\newcommand{\beq}{\begin{eqnarray*}}
\newcommand{\eeq}{\end{eqnarray*}}
\newcommand{\beqa}{\begin{eqnarray}}
\newcommand{\eeqa}{\end{eqnarray}}
\newcommand{\bea}{\begin{align*}}
\newcommand{\eea}{\end{align*}}
\def\proof{\noindent {\it Proof. $\, $}}
\def\endproof{\hfill $\Box$ \vskip 5 pt}
\newcommand{\I}{\mathds{1}}
\newcommand{\Iast}{\mathds{1}^{\!\ast}}
\newcommand{\wh}{\widehat}
\newcommand{\wt}{\widetilde}
\def\phi{\varphi }
\newcommand{\norm}{|\!\!|\!|\!\!|}
\newcommand{\aaee}{\mbox{\rm a.e.}}
\newcommand{\Leb }{\ell }
\newcommand{\rr}{{\mathbb R}}
\newcommand{\ff}{{\mathbb F}}
\def\gg{{\mathbb G}}
\def\hdd{h_d}
\newcommand{\cG}{{\cal G}}
\def\P{\mathbb P}
\def\PT{\wt{\mathbb P}}
\newcommand{\XX}{\mathbb X}
\newcommand{\zzti}{{\widetilde z}^i_t}
\newcommand{\EP}{{\mathbb E}_{\mathbb P}}
\newcommand{\Hlamo}{\wh{\mathcal{H}}_{\lambda}^{2}}
\newcommand{\wHlamo}{\wh{\mathcal{H}}_{\lambda}^{2}}
\newcommand{\wHlamd}{\wh{\mathcal{H}}_{\lambda}^{2,d}}
\newcommand{\wHzero}{\wh{\mathcal{H}}_{0}^{2}}
\newcommand{\wHzerd}{\wh{\mathcal{H}}_{0}^{2,d}}
\newcommand{\sumik}{\textstyle{\sum}}
\newcommand{\Keywords}[1]{\par\noindent{\small{\bf Keywords\/}: #1}}
\newcommand{\Class}[1]{\par\noindent{\small{\bf Mathematics Subjects Classification (2010)\/}: #1}}
\title{{\Large \bf BSDEs DRIVEN BY A MULTI-DIMENSIONAL MARTINGALE AND THEIR
APPLICATIONS TO MARKET MODELS WITH FUNDING COSTS} \vskip 65 pt }
\author{Tianyang Nie and Marek Rutkowski\footnote{The research of Tianyang Nie and Marek Rutkowski
was supported under Australian Research Council's Discovery Projects funding scheme (DP120100895).}
\\ School of Mathematics and Statistics \\ University of Sydney
\\ Sydney, NSW 2006, Australia}
\date{\vskip 50 pt 1 December 2014 \vskip 50 pt}
\begin{document}

\maketitle

\begin{abstract}
We establish some well-posedness and comparison results for BSDEs driven by one- and multi-dimensional martingales.
On the one hand, our approach is largely motivated by results and methods developed in Carbone et al. \cite{CFS-2008} and
El Karoui and Huang \cite{ELH-1997}. On the other hand, our results are also motivated by the recent developments in arbitrage pricing theory under funding costs and collateralization. A new version of the comparison theorem for BSDEs driven
by a multi-dimensional martingale is established and applied to the pricing and hedging BSDEs studied in Bielecki and Rutkowski \cite{BR-2014} and Nie and Rutkowski \cite{NR-2014}. This allows us to obtain the existence and uniqueness results for unilateral prices and to demonstrate the existence of no-arbitrage bounds for a collateralized contract when both agents have non-negative
initial endowments.
\vskip 20 pt
\Keywords{BSDE, comparison theorem, arbitrage pricing, funding costs}
\vskip 20 pt
\Class{60H10,$\,$91G40}
\end{abstract}

%%%%%%%%%%%%%%%%%%%%%%%%%%%%%%%%%%%%%%%%%%%%%%%%%%%%%%%%%%%%%%%%%%%%%%%%%%%%%%%%%%%%%%%%%%%%%%%

\newpage
%\tableofcontents
%\newpage

%%%%%%%%%%%%%%%%%%%%%%%%%%%%%%%%%%%%%%%%%%%%%%%%%%%%%%%%%%%%%%%%%%%%%%%%%%%%%%%%%%%%%%%%%%%%%%%
%%%%%%%%%%%%%%%%%%%%%%%%%%%%%%%%%%%%%%%%%%%%%%%%%%%%%%%%%%%%%%%%%%%%%%%%%%%%%%%%%%%%%%%%%%%%%%%
\section{Introduction} \label{sect1}
%%%%%%%%%%%%%%%%%%%%%%%%%%%%%%%%%%%%%%%%%%%%%%%%%%%%%%%%%%%%%%%%%%%%%%%%%%%%%%%%%%%%%%%%%%%%%%%
%%%%%%%%%%%%%%%%%%%%%%%%%%%%%%%%%%%%%%%%%%%%%%%%%%%%%%%%%%%%%%%%%%%%%%%%%%%%%%%%%%%%%%%%%%%%%%%

The origin of the theory of backward stochastic differential equations (BSDEs) can be traced back to the work by Bismut \cite{B-1973} and, especially, the paper by Pardoux and Peng \cite{PP-1990}, who were the first to consider the general BSDEs driven by a Brownian motion. Since then, the theory of BSDEs attracted a great interest because of its application in stochastic control theory, PDEs and mathematical finance (see, e.g., \cite{EPQ-1997,EQ-1995,HIM-2005,PP-1992,P-1991,RE-2000}).
However, despite the fact that prices of financial assets are usually modeled as semimartingales,
applications of BSDEs in finance beyond the Brownian setting are relatively rare (see, e.g., \cite{CCR-2014,CCR-2014b,MS-2005,MT-2003c,M-2009}).  The BSDEs driven by a semimartingale were already introduced by
Chitashvili \cite{C-1983}, but since then BSDEs driven by a general martingale were not extensively studied.
Carbone et al. \cite{CFS-2008} and El Karoui and Huang \cite{ELH-1997} examined BSDEs driven by a  c\`{a}dl\`{a}g
martingale without postulating the predictable representation property (PRP), whereas  Li \cite{L-2003} examined
BSDEs driven by a one-dimensional continuous martingale enjoying the PRP.

In this note, we first establish some results for BSDEs driven by one- and multi-dimensional martingales.
Our approach is largely motivated by results and methods developed in Carbone et al. \cite{CFS-2008} and El Karoui and Huang \cite{ELH-1997}. However, for simplicity of presentation, we only consider here the BSDEs driven by continuous martingales with the PRP, whereas in \cite{CFS-2008,ELH-1997} the authors studied the BSDEs driven by a c\`{a}dl\`{a}g martingale $M$ without postulating the PRP, but under the additional assumption that the underlying filtration is quasi-left continuous. Under the latter assumption, the predictable quadratic variation $\langle M\rangle$ of $M$ is continuous. It is worth noting that all results and a priori estimates established in this note will still be valid in this more general framework.

Our main goal is to prove the existence, uniqueness and comparison theorems covering the BSDEs introduced \cite{BR-2014,NR-2014,NR-2014a}, where the pricing and hedging of contingent claims in financial models with funding costs is studied. We mention that Mocha and Westray \cite{MW-2012} and Tevzadze \cite{T-2008}, established a comparison theorem for the case of the quadratic growth and, in \cite{T-2008}, for a special choice of a generator. For the linear growth case, Carbone et al. \cite{CFS-2008} gave the comparison results for BSDEs driven by a one-dimensional c\`{a}dl\`{a}g martingale  (see Theorem 2.2 in \cite{CFS-2008}). To this end, they used the Dol\'{e}ans exponential of a c\`{a}dl\`{a}g martingale and they imposed the requirement it is a positive, uniformly integrable martingale and, in addition, satisfies some integrability conditions (see, in particular, Lemma 2.2 in \cite{CFS-2008} or equation \eqref{condis} in Section \ref{sect2} of this work), which are not easy to verify and may be too restrictive for  applications). We stress in this regard that  the results from \cite{CFS-2008,MW-2012,T-2008} are not sufficient for the purposes studied in \cite{BR-2014,NR-2014,NR-2014a}, since the assumptions made in these papers fail to
hold in the context of a typical financial model. Consequently, some extensions of the existing comparison theorems for BSDEs driven by multi-dimensional martingales are needed to demonstrate the existence of non-empty intervals for fair bilateral prices (or bilaterally profitable prices), as well as the monotonicity of prices with respect to the initial endowment of an agent. In Section \ref{sect5}, we show that Theorems \ref{comparison theorem 2x} and \ref{appendix wellposedness theorem for lending special BSDE} are suitable tools to handle BSDEs derived in market model with funding costs when dealing with a collateralized contract.
For further applications of results from this work, we refer to Section 5 in \cite{NR-2014} and Section 3.3 in  \cite{NR-2014a}.
To summarize, our main goal is to extend results from \cite{CFS-2008} for BSDEs driven by multi-dimensional martingales and to relax rather stringent assumptions postulated in \cite{CFS-2008}.

This work is organized as follows. In Section  \ref{sect2}, we recall some definitions and results from \cite{CFS-2008}
and we consider the extended BSDEs driven by one-dimensional continuous martingale. In Section  \ref{sect3}, we study BSDEs driven by a multi-dimensional continuous martingale $M$. We first obtain the existence, uniqueness, and stability results for solutions to these BSDEs under the assumption that the generator satisfies to $m$-Lipschitz condition. Next, we prove the comparison theorem
(see Theorem \ref{comparison theorem 2x}) for BSDEs with a uniformly $m$-Lipschitzian generator. The goal of Section \ref{sect4} is to analyze alternative assumptions regarding the process $m$ arising in representation  \eqref{edded} of the quadratic variation $\langle M\rangle$ (see Assumptions \ref{assumption 2 for m}  and \ref{assumption 3 for m}). We conclude the paper by demonstrating in Section \ref{sect5} that the assumptions of Theorems \ref{comparison theorem 2x} and \ref{appendix wellposedness theorem for lending special BSDE} are satisfied by a particular class of BSDEs that arise in the context of financial models with funding costs studied in related papers \cite{BR-2014,NR-2014,NR-2014a}. We also show that our comparison result is a suitable tool for deriving the bounds for unilateral prices of a collateralized contract in a financial market with funding costs.

\newpage

%%%%%%%%%%%%%%%%%%%%%%%%%%%%%%%%%%%%%%%%%%%%%%%%%%%%%%%%%%%%%%%%%%%%%%%%%%%%%%%%%%%%%%%%%%%%%%%%
%%%%%%%%%%%%%%%%%%%%%%%%%%%%%%%%%%%%%%%%%%%%%%%%%%%%%%%%%%%%%%%%%%%%%%%%%%%%%%%%%%%%%%%%%%%%%%%%
\section{BSDEs Driven by a One-Dimensional Martingale} \label{sect2}
%%%%%%%%%%%%%%%%%%%%%%%%%%%%%%%%%%%%%%%%%%%%%%%%%%%%%%%%%%%%%%%%%%%%%%%%%%%%%%%%%%%%%%%%%%%%%%%%
%%%%%%%%%%%%%%%%%%%%%%%%%%%%%%%%%%%%%%%%%%%%%%%%%%%%%%%%%%%%%%%%%%%%%%%%%%%%%%%%%%%%%%%%%%%%%%%%

Let us stress that main goal of this work is to examine BSDEs driven by a multi-dimensional martingale.
For the sake of completeness, we first provide in this section a minor extension of results from Carbone et al. \cite{CFS-2008} when a one-dimensional driving martingale $M$ is complemented by a predetermined driving process $U$ (see equation \eqref{general BSDE 2}), which arises in financial applications studied in Section \ref{sect5}. Moreover, we provide here a discussion of assumptions made in  \cite{CFS-2008}, since we aim to relax some of them in the foregoing section.

We assume that we are given a filtered probability space $(\Omega, {\cal G}, \gg , \P)$ satisfying the usual conditions of right-continuity and completeness. As customary, we assume that the $\sigma$-field ${\cal G}_0$ is trivial.
Let $M$ be a real-valued, continuous, square-integrable martingale on this space.
We postulate that $M$ has the predictable representation property with respect to the filtration $\gg$ under $\P$.
We denote by $\langle M\rangle$ the quadratic variation process of $M$, which is a continuous, increasing, $\gg$-adapted
process vanishing at zero such that $M^{2}-\langle M\rangle$ is a continuous (uniformly integrable) martingale.
We introduce the following notation, for any non-negative constant $\lambda $, \hfill \break
$\Hlamo$ -- the subspace of all real-valued,  $\gg$-adapted processes $X$ satisfying
\be \label{eddedvxx}
\|X\|_{\Hlamo}^{2}:= \EP \bigg[ \int_{0}^{T}e^{\lambda \langle M\rangle_{t}} X^2_{t}\,
d\langle M\rangle_{t} \bigg]<\infty ,
\ee
$L^{2}_{\lambda}$ -- the space of all real-valued, $\mathcal{G}_{T}$-measurable random variables $\eta$ such that
\bde
\|\eta\|_{L^2_{\lambda}}^{2} := \EP \Big[e^{\lambda \langle M\rangle_{T}} \eta^{2} \Big]<\infty.
\ede
Note that $L^{2}_{0} = L^2 (\rr)$ is the space of  $\mathcal{G}_{T}$-measurable,
square-integrable random variables.

%%%%%%%%%%%%%%%%%%%%%%%%%%%%%%%%%%%%%%%%%%%%%%%%%%%%%%%%%%%%%%%%%%%%%%%%%%%%%%%%%%%%%%%
\subsection{BSDEs with a Uniformly Lipschitzian Generator} \label{sect2.1}
%%%%%%%%%%%%%%%%%%%%%%%%%%%%%%%%%%%%%%%%%%%%%%%%%%%%%%%%%%%%%%%%%%%%%%%%%%%%%%%%%%%%%%%

Assume that we are given a real-valued, continuous martingale $M$, a process $U$, and a random variable $\eta $.
We consider the following BSDE driven by $M$ and $U$, with generator $h$ and the terminal value $\eta $, for $t \in [0,T]$,
\be \label{general BSDE 2}
\left\{
\begin{array}
[c]{ll}
dY_t  = Z_t \, dM_t - h(t, Y_t , Z_t )\, d\langle M\rangle_{t} + dU_t ,\medskip\\
Y_{T}= \eta .
\end{array}
\right.
\ee

In Section  \ref{sect2}, we work under the following standing assumption imposed on the generator $h$.

\bhyp \label{assumption 1 for the driver of the general BSDE}
The generator $h:\Omega \times[0,T] \times \rr \times \rr \rightarrow \rr$ is a ${\cal G}\otimes\mathcal{B}([0,T])\otimes\mathcal{B}(\rr)\otimes\mathcal{B}(\rr)$-measurable function such that $h(\cdot,y,z)$ is a $\gg$-adapted process for any fixed $(y,z)\in\rr \times \rr$ and the process $h(\cdot,0,0)$ belongs to $\Hlamo $.
\ehyp

We adopt the following definition of a solution to BSDE (\ref{general BSDE 2}). It is clear from this definition
that we restrict our attention to solutions $(Y,Z)$ from the space $\Hlamo \times \Hlamo$.

\bd \lab{defrr}
A {\it solution} to BSDE (\ref{general BSDE 2}) is a pair $(Y,Z)\in \Hlamo \times \Hlamo$ of
processes satisfying (\ref{general BSDE 2}), $\mathbb{P}$-a.s., such that $Z$ is $\gg$-predictable and
$Y-U$ is a continuous process.
\ed

We emphasize that Definition \ref{defrr} postulates the continuity of the process $Y-U$, rather than $Y$. Obviously, if
we assume that $U$ is $\gg$-progressively measurable (resp. $\gg$-predictable or continuous), then $Y$ will share this property as well.  For any natural $d$, we denote by $\| \cdot \|$ the Euclidean norm in ${\mathbb R}^d$.
In this section, we set $d=1$, but the next definition also applies to the multi-dimensional case.
% Also, let $\mathcal{P}$ stand for the $\sigma$-field of $\gg$-predictable sets in $\Omega \times[0,T]$.

\bd \label{ulip}
We say that the generator $h$ satisfies the {\it uniform Lipschitz condition} if there exists a constant $L$ such that, for all $t\in[0,T]$ and all $y_{1},y_{2}, z_{1},z_{2}\in\rr^d$,
\be  \label{uniformly Lipschitz for the driver}
|h(t,y_{1},z_{1})-h(t,y_{2},z_{2})|\leq L\left(|y_{1}-y_{2}|+\|z_{1}-z_{2}\|\right). % , \quad \P-\aass
\ee
\ed

The next definition hinges on a minor adjustment of the terminology used in Carbone et al. \cite{CFS-2008} and El Karoui et al.  \cite{EPQ-1997}

\bd \lab{stand}
We say that $(h, \eta ,U)$ is a $(\lambda ,L)$-{\it standard parameter} if: \hfill \break
(i) $h:\Omega \times[0,T] \times \rr \times \rr \rightarrow \rr$ satisfies the uniform Lipschitz
condition \eqref{uniformly Lipschitz for the driver} with a constant $L$, \hfill \break
(ii) the process $h(\cdot,0,0)$ belongs to $ \Hlamo $, \hfill \break
(iii) a random variable $\eta $ belongs to $L^{2}_{\lambda}$,  \hfill \break
(iv) a real-valued, $\gg$-adapted process $U$ belongs to $\Hlamo $ and $U_{T}\in L^{2}_{\lambda}$.
\ed

The next result is an almost immediate consequence of Theorem 2.1 in \cite{CFS-2008}, where the case
of $U=0$ was examined.

\begin{theorem}
Assume that $(h,\eta ,U )$ is a $(\lambda ,L)$-standard parameter for some $\lambda>\lambda_{0}(L)$ where
\bde
\lambda_{0}(L):=\left\{
\begin{array}
[c]{ll}
2\sqrt{2}L,  & L\leq\frac{\sqrt{2}}{2},\medskip\\
2L^{2}+1,    & L>\frac{\sqrt{2}}{2}.
\end{array}
\right.
\ede
Then BSDE (\ref{general BSDE 2}) has a unique solution $(Y,Z)\in \Hlamo \times \Hlamo $. Moreover, the process $Y-U$ satisfies
\bde
\EP \bigg[\sup_{t\in[0,T]}e^{\lambda\langle M\rangle_{t}} (Y_{t}-U_{t})^{2}\bigg]<\infty.
\ede
\end{theorem}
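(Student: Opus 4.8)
The plan is to reduce the BSDE \eqref{general BSDE 2}, which carries the extra driving term $U$, to the driverless case $U=0$ already treated in Theorem 2.1 of \cite{CFS-2008}, and then to transfer the existence, uniqueness and supremum estimate obtained there back to the original equation. The natural substitution is $\wh Y_t := Y_t - U_t$, since Definition \ref{defrr} already singles out $Y-U$ as the process expected to be continuous. Under this change of variable the term $dU_t$ cancels, and $(\wh Y, Z)$ formally solves
\be
\left\{
\begin{array}{ll}
d\wh Y_t = Z_t\, dM_t - \wh h(t, \wh Y_t, Z_t)\, d\langle M\rangle_t, \medskip \\
\wh Y_T = \wh\eta,
\end{array}
\right.
\ee
where $\wh h(t,y,z) := h(t, y + U_t, z)$ and $\wh\eta := \eta - U_T$. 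This is precisely a BSDE of the form covered in \cite{CFS-2008}, so the whole argument amounts to checking that the transformed data still form a standard parameter.

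The second step is therefore to verify that $(\wh h, \wh\eta, 0)$ is again a $(\lambda, L)$-standard parameter, so that Theorem 2.1 of \cite{CFS-2008} applies with the same exponent $\lambda > \lambda_0(L)$. Measurability and adaptedness of $\wh h(\cdot,y,z)$ follow from the joint measurability of $h$ and the $\gg$-adaptedness of $U$. The Lipschitz condition \eqref{uniformly Lipschitz for the driver} is inherited verbatim, because shifting the $y$-argument by the same process $U_t$ on both sides leaves the increments unchanged, so $\wh h$ is $L$-Lipschitz. For the integrability of $\wh h(\cdot, 0, 0) = h(\cdot, U, 0)$, I would combine the triangle inequality with the Lipschitz bound, $|h(t, U_t, 0)| \leq |h(t,0,0)| + L|U_t|$, and then use that both $h(\cdot,0,0)$ and $U$ lie in $\Hlamo$; since $\Hlamo$ is a normed space, $\|\wh h(\cdot,0,0)\|_{\Hlamo} \leq \|h(\cdot,0,0)\|_{\Hlamo} + L\|U\|_{\Hlamo} < \infty$. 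Finally $\wh\eta = \eta - U_T \in L^{2}_{\lambda}$ because $\eta \in L^{2}_{\lambda}$ and $U_T \in L^{2}_{\lambda}$ by hypotheses (iii)--(iv).

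Having verified the hypotheses, Theorem 2.1 of \cite{CFS-2008} yields a unique pair $(\wh Y, Z)\in \Hlamo \times \Hlamo$ with $Z$ predictable and $\wh Y$ continuous, satisfying $\EP\big[\sup_{t\in[0,T]} e^{\lambda\langle M\rangle_t}\, \wh Y_t^2\big] < \infty$. I would then set $Y := \wh Y + U$; since $U \in \Hlamo$ this gives $Y \in \Hlamo$, the process $Y-U = \wh Y$ is continuous, and the supremum estimate reads exactly as in the statement. Uniqueness transfers because $(Y,Z)\mapsto (Y-U, Z)$ is a bijection between solutions of \eqref{general BSDE 2} and solutions of the corresponding $U=0$ equation. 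I do not expect any genuine obstacle here beyond bookkeeping: the only point requiring care is confirming that the shifted generator retains the standard-parameter properties, and this is immediate from the Lipschitz estimate and the fact that the integrability requirements are linear in $U$.
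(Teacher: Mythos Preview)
Your proposal is correct and follows exactly the paper's own argument: set $\wh Y_t := Y_t - U_t$, verify that $(\wh h,\wh\eta)$ with $\wh h(t,y,z)=h(t,y+U_t,z)$ and $\wh\eta=\eta-U_T$ is again a $(\lambda,L)$-standard parameter (using the Lipschitz bound to show $\wh h(\cdot,0,0)=h(\cdot,U,0)\in\Hlamo$), apply Theorem~2.1 of \cite{CFS-2008}, and transfer the solution and the supremum estimate back via $Y=\wh Y+U$. The level of detail you supply for the standard-parameter verification is in fact slightly more explicit than the paper's, but the approach is identical.
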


\begin{proof}
Let us set $\wh Y_t = Y_t - U_t$. Then
\be \label{transferred general BSDE 1}
\wh Y_t  = \wh \eta - \int_t^T Z_u \, dM_u  +\int_t^T \wh h(u, \wh Y_u , Z_u )\, d\langle M\rangle_{u}
\ee
where $\wh \eta = \eta - U_T$ and $\wh h(t,\wh Y_t, Z_t ) : =h(t,\wh Y_t + U _t , Z_t )$.
Since, by assumption, the processes $h(\cdot,0,0)$ and $U$ belong to $\Hlamo $ and condition
(\ref{uniformly  Lipschitz for the driver}) holds, we have $\wh h(\cdot,0, 0)=h(\cdot,U , 0)\in \Hlamo$.
Moreover, it is easy to check that $\wh \eta \in L^{2}_{\lambda}$ and $\wh h$ satisfies (\ref{uniformly Lipschitz for the driver}). Therefore, $(\wh{h}, \wh \eta )$ is also a $(\lambda ,L)$-standard parameter and thus BSDE (\ref{transferred general BSDE 1}) has a unique solution $(\wh{Y},Z)\in \Hlamo \times \Hlamo $, by virtue of Theorem 2.1 in \cite{CFS-2008}. Moreover, the process $\wh{Y}$ is continuous and it satisfies
\bde
\EP \bigg[\sup_{t\in[0,T]}e^{\lambda\langle M\rangle_{t}}\wh{Y}^2_{t}\bigg]<\infty.
\ede
We conclude that the pair $(Y,Z)$ with $Y:=\wh Y + U \in \Hlamo$ is a unique solution to BSDE (\ref{general BSDE 2}).
\end{proof}

%%%%%%%%%%%%%%%%%%%%%%%%%%%%%%%%%%%%%%%%%%%%%%%%%%%%%%%%%%%%%%%%%%%%%%%%%%%%%%%%
\subsection{Comparison Theorem: One-Dimensional Case}      \label{2.2}
%%%%%%%%%%%%%%%%%%%%%%%%%%%%%%%%%%%%%%%%%%%%%%%%%%%%%%%%%%%%%%%%%%%%%%%%%%%%%%%%

We now focus on the comparison theorem of BSDE driven by a one-dimensional continuous martingale.
Let $\mathcal{E}(M)$ denote the Dol\'eans exponential of a continuous martingale $M$, that is,
\bde
\mathcal{E}_t(M):=\exp\left\{M_{t}-\frac{1}{2}\langle M\rangle_{t}\right\}.
\ede
From Novikov's criterion (see, for instance, Corollary 1.1 in Kazamaki \cite{K-1994}), it is known that if $\langle M\rangle_{T}$ is bounded, then $\EP [ \mathcal{E}_t(M)]=1$ for all $t \in [0,T]$.

For a given function $h:\Omega \times[0,T] \times \rr \times \rr \rightarrow \rr$, we introduce the following processes
\begin{align*}
&\Delta_{Y}h_{t}=\frac{h(t,Y^{1}_{t},Z^{1}_{t})-h(t,Y^{2}_{t},Z^{1}_{t})}{Y^{1}_{t}-Y^{2}_t}\,\I_{\{Y^{1}_{t}\neq Y^{2}_t\}}, \\
&\Delta_{Z}h_{t}=\frac{h(t,Y^{2}_{t},Z^{1}_{t})-h(t,Y^{2}_{t},Z^{2}_{t})}{Z^{1}_{t}-Z^{2}_t}\, \I_{\{Z^{1}_{t}\neq Z^{2}_t\}}.
\end{align*}
From Theorem 2.2 in Carbone et al. \cite{CFS-2008} (see also conditions (i)--(ii) in Lemma 2.2 in  \cite{CFS-2008}),
we obtain the following version of the comparison theorem for solutions to BSDEs.

\begin{theorem} \label{comparison theorem 1}
Let $(h^i, \eta^i ,U)$ be a $(\lambda^i ,L^i)$-standard parameter and
let $(Y^{i},Z^{i})$ be the unique solution of the following BSDE, for $i=1,2$,
\be\label{general BSDE 3}
\left\{
\begin{array}
[c]{ll}
dY_t^{i} =Z^{i}_t \, dM_t - h^{i}(t, Y^{i}_t , Z^{i}_t )\,d\langle M\rangle_{t} + dU_t,\medskip\\
Y_{T}^{i}=\eta^{i}.
\end{array}
\right.
\ee
Suppose that $\mathcal{E}\big( \int_{0}^{\cdot }\Delta_{Z}h^{1}_{u}\, dM_{u}\big)$ is a positive,
uniformly integrable martingale and
\be \label{condis}
\EP \left[\left(\sup_{t\in[0,T]}\exp\left\{\int_{0}^{t}\Delta_{Y}h^{1}_{u}\,d\langle M\rangle_{u}\right\}\right)^{2}
\left(\mathcal{E}_T \Big(\int_{0}^{\cdot }\Delta_{Z}h^{1}_{u}\, dM_{u}\Big)\right)^{2}\right]<\infty.
\ee
If $\eta^{1}\ge\eta^{2}$ and $h^{1}(\cdot ,Y^{2} , Z^{2})\ge h^{2}(\cdot ,Y^{2}, Z^{2}),\, \P\otimes \Leb$-a.e., then
$Y^{1}_{t}\ge Y^{2}_{t}$ for any $t\in[0,T]$.
\end{theorem}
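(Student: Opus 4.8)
The plan is to linearize the difference of the two equations and then remove the linear terms by an exponential integrating factor, so that the sign of $Y^1-Y^2$ can be read off from an explicit conditional-expectation representation. Since the driving process $U$ and the martingale $M$ are common to both equations in \eqref{general BSDE 3}, they cancel in the difference, and no preliminary reduction to the case $U=0$ is needed. Writing $\delta Y = Y^1 - Y^2$, $\delta Z = Z^1 - Z^2$ and $\delta\eta = \eta^1 - \eta^2$, I would first subtract the two equations to get
$$d(\delta Y_t) = \delta Z_t\, dM_t - \big(h^1(t,Y^1_t,Z^1_t) - h^2(t,Y^2_t,Z^2_t)\big)\, d\langle M\rangle_t,\quad \delta Y_T = \delta\eta.$$

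Next I would split the generator difference by inserting intermediate terms, $h^1(t,Y^1_t,Z^1_t) - h^2(t,Y^2_t,Z^2_t) = [h^1(t,Y^1_t,Z^1_t)-h^1(t,Y^2_t,Z^1_t)] + [h^1(t,Y^2_t,Z^1_t)-h^1(t,Y^2_t,Z^2_t)] + f_t$, where $f_t := h^1(t,Y^2_t,Z^2_t)-h^2(t,Y^2_t,Z^2_t)$. By the very definitions of $\Delta_Y h^1$ and $\Delta_Z h^1$ the first two brackets equal $\Delta_Y h^1_t\,\delta Y_t$ and $\Delta_Z h^1_t\,\delta Z_t$ (both sides vanishing on the respective exceptional sets), so the difference equation becomes the linear BSDE
$$d(\delta Y_t) = \delta Z_t\, dM_t - \big(\Delta_Y h^1_t\,\delta Y_t + \Delta_Z h^1_t\,\delta Z_t + f_t\big)\, d\langle M\rangle_t.$$
The uniform Lipschitz property bounds $\Delta_Y h^1$ and $\Delta_Z h^1$ by $L^1$, and the hypothesis $h^1(\cdot,Y^2,Z^2)\ge h^2(\cdot,Y^2,Z^2)$ gives $f_t\ge 0$, $\P\otimes\Leb$-a.e.

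I would then introduce the adjoint process $\Gamma$ solving $d\Gamma_t = \Gamma_t\big(\Delta_Y h^1_t\, d\langle M\rangle_t + \Delta_Z h^1_t\, dM_t\big)$ with $\Gamma_0 = 1$, whose explicit strictly positive solution
$$\Gamma_t = \exp\Big\{\int_0^t \Delta_Y h^1_u\, d\langle M\rangle_u\Big\}\,\mathcal{E}_t\Big(\int_0^\cdot \Delta_Z h^1_u\, dM_u\Big)$$
is precisely the product of the two factors appearing in \eqref{condis}. Applying It\^o's product rule to $\Gamma_t\,\delta Y_t$, the drift contributions involving $\Delta_Y h^1_t\,\delta Y_t$ and, after accounting for $d\langle\Gamma,\delta Y\rangle_t = \Gamma_t\Delta_Z h^1_t\,\delta Z_t\, d\langle M\rangle_t$, those involving $\Delta_Z h^1_t\,\delta Z_t$ cancel exactly, leaving
$$d(\Gamma_t\,\delta Y_t) = \Gamma_t\big(\delta Z_t + \Delta_Z h^1_t\,\delta Y_t\big)\, dM_t - \Gamma_t f_t\, d\langle M\rangle_t.$$
Integrating from $t$ to $T$ and taking $\mathcal{G}_t$-conditional expectations, once the stochastic integral is known to be a true martingale, yields $\Gamma_t\,\delta Y_t = \EP\big[\Gamma_T\,\delta\eta + \int_t^T \Gamma_u f_u\, d\langle M\rangle_u \,\big|\, \mathcal{G}_t\big]$. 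As $\Gamma_T\ge 0$, $\delta\eta\ge 0$, $f_u\ge 0$ and $\langle M\rangle$ is increasing, the right-hand side is non-negative, and dividing by $\Gamma_t>0$ gives $\delta Y_t\ge 0$, i.e. $Y^1_t\ge Y^2_t$, for every $t$.

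The main obstacle is exactly this martingale justification: the continuous local martingale $N_t = \int_0^t \Gamma_u(\delta Z_u + \Delta_Z h^1_u\,\delta Y_u)\, dM_u$ must be shown to be a genuine (uniformly integrable) martingale rather than merely a local one. This is where the standing hypotheses enter. The positivity and uniform integrability of $\mathcal{E}(\int_0^\cdot \Delta_Z h^1_u\, dM_u)$ control the martingale factor of $\Gamma$, while condition \eqref{condis} is calibrated to bound $\sup_{t}\Gamma_t$ in $L^2(\P)$. Combining these with the a priori estimate of the preceding theorem, which gives $\EP[\sup_t e^{\lambda\langle M\rangle_t}(\delta Y_t)^2]<\infty$ for a suitable $\lambda$ together with $\delta Z\in\Hlamo$, I would apply the Cauchy--Schwarz and Burkholder--Davis--Gundy inequalities to bound $\EP[\sup_{t\le T}|N_t|]$, placing $N$ in class (D); a localization by stopping times and a dominated passage to the limit then make the conditional-expectation identity rigorous. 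This integrability verification, isolated in Carbone et al.\ \cite{CFS-2008} as the hypotheses of their Lemma 2.2, is the delicate part, whereas the algebraic cancellation in the It\^o step is routine.
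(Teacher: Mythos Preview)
Your proposal is correct and follows the same linearization-plus-adjoint-process approach that the paper invokes: the paper does not actually prove this statement in detail but simply imports it from Theorem~2.2 and Lemma~2.2 of Carbone et al.~\cite{CFS-2008}, whose method is exactly the one you outline. The same template (with the density process $q$ playing the role of your $\Gamma$) is spelled out in full in the paper's proof of the multi-dimensional Theorem~\ref{comparison theorem 2x}, so your argument is entirely in line with both the cited source and the paper's own techniques.
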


Alternatively, one can consider the following assumptions: $h^{1}(\cdot ,Y^{1}, Z^{1})\ge h^{2}(\cdot ,Y^{1}, Z^{1})$,  $\P\otimes \Leb$-a.e., $\eta^{1}\ge\eta^{2}$, $\mathcal{E}(\int_{0}^{\cdot }\Delta_{Z}h^{2}_{u}\,dM_{u})$
is a positive uniformly integrable martingale, and
\bde
\EP \left[\left(\sup_{t\in[0,T]}\exp\left\{\int_{0}^{t}\Delta_{Y}h^{2}_{u}\,d\langle M\rangle_{u}\right\}\right)^{2}
\left(\mathcal{E}_T \Big(\int_{0}^{\cdot }\Delta_{Z}h^{2}_{u}\, dM_{u}\Big)\right)^{2}\right]<\infty.
\ede
Then the assertion of Theorem \ref{comparison theorem 1} is still valid.

Let us make some important observations regarding condition \eqref{condis} in Theorem \ref{comparison theorem 1}.
Since $h^1$ satisfies the uniform Lipschitz condition, the processes $|\Delta_{Y}h^1_{u}|$ and $|\Delta_{Z}h^1_{u}|$
are both bounded by $L$. Next, if we assume that $M$ is a continuous, square-integrable martingale then, from Remark 1.3 in \cite{K-1994},  we deduce that the Dol\'eans exponential $\mathcal{E}(\int_{0}^{\cdot }\Delta_{Z}h^1_{u}\,dM_{u})$
is a positive, uniformly integrable martingale. Furthermore, if we assume that $\langle M\rangle_{T}$ is bounded, then $\langle \int_{0}^{\cdot }\Delta_{Z}h^1_{u}\,dM_{u}\rangle_T$ is also bounded and thus, using Novikov's criterion,
we conclude condition \eqref{multid BSDE} is satisfied. It is worth stressing that the postulate that
$\langle M\rangle_{T}$ is bounded is very restrictive, since it is not likely to hold in further applications
of BSDEs driven by a martingale.

\brem \label{remark for cadlag martingale}
Let us now consider the case where $M$ is a c\`{a}dl\`{a}g martingale. Then the Dol\'{e}ans exponential $\mathcal{E}(M)$
is the unique solution of the SDE
\bde
\mathcal{E}_{t}(M)=1+\int_{0}^{t} \mathcal{E}_{u-}(M)\, dM_{u}.
\ede
It is known that  $\mathcal{E}(M)$ is a local martingale and equals (we denote $\Delta M_{u}=M_{u}-M_{u-}$)
\bde
\mathcal{E}_{t}(M)=\exp \left\{M_{t}-M_{0}-\frac{1}{2}\langle M \rangle_{t}\right\}\prod_{0< u \leq t}(1+\Delta M_{u})e^{-\Delta M_{u}}
\ede
where $\langle M \rangle = \langle M^c \rangle$ (as usual, $M^c$ stands for the continuous martingale part of $M$).
If the martingale $M$ is square-integrable with $\Delta M_{s}>-1$ and $\langle M\rangle_{T}$ is bounded, then $\mathcal{E}(M)$ is a positive, square-integrable (thus uniformly integrable) martingale (see \cite{LM-1978} or Remark 1.3 in \cite{K-1994}). Therefore, if $\Delta \big( \int_{0}^{\cdot }\Delta_{Z}h^{1}_{u}\, dM_{u} \big) >-1$  and $\langle \int_{0}^{\cdot}\Delta_{Z}h^{1}_{u}\, dM_{u}\rangle_{T}$ is bounded, we still have that $\mathcal{E}\big(\int_{0}^{\cdot }\Delta_{Z}h^{1}_{u}\, dM_{u}\big)$ is a positive, uniformly integrable martingale. Hence if $\langle M\rangle_{T}$ is bounded, then condition \eqref{condis} holds.
\erem

\brem \label{remark for multi-dimensional case}
To directly extend the results of this subsection to BSDEs driven by a multi-dimensional martingale $M=(M^1,\dots ,M^d)^*$,
one could consider the following generalization of BSDE  \eqref{general BSDE 2}
\be \label{multid BSDE}
dY_t  = Z_t^{\ast} \, dM_t -  \Iast d\langle M\rangle_{t}\, \hdd(t, Y_t , Z_t )\, + dU_t ,\quad Y_{T}= \eta
\ee
with the $\rr^d$-valued generator $\hdd: \Omega \times[0,T] \times \rr \times \rr^d \rightarrow \rr^d$ where
$\Iast =(1,1, \dots ,1)$.
Unfortunately, the term $ \Iast d\langle M\rangle_{t}\, \hdd(t, Y_t , Z_t )$ in \eqref{multid BSDE} seems to be rather untractable and thus we focus on BSDE \eqref{special BSDE} driven by a multi-dimensional martingale in which the matrix-valued process $\langle M\rangle$ can be factorized (see equation \eqref{mm33}), although we also make some comments about solvability of BSDE \eqref{multid BSDE} in Section \ref{sect4.1}.

Suppose that we manage to prove the comparison theorem for BSDE (\ref{multid BSDE}) driven by a multi-dimensional martingale analogous to Theorem~\ref{comparison theorem 1}. Since the boundedness of $\langle M\rangle_{T}$ may fail to hold, typically,
a straightforward application of a multi-dimensional extension of Theorem \ref{comparison theorem 1} would not be possible anyway, since it would require to verify the conditions imposed on the Dol\'eans exponential $\mathcal{E}(M)$ and this task is rather hard.

In next section, we will study BSDEs driven by multi-dimensional continuous martingales, since such BSDEs play an important r\^ole in numerous financial applications where market models with several risky assets are introduced and studied.  Our main goal is to establish a version of a comparison theorem in which, in particular, the boundedness of $\langle M\rangle_{T}$ is not postulated (see Theorem~\ref{comparison theorem 2x}).
\erem

%%%%%%%%%%%%%%%%%%%%%%%%%%%%%%%%%%%%%%%%%%%%%%%%%%%%%%%%%%%%%%%%%%%%%%%%%%%%%%%%%%%%%%%%%%%%%%%
%%%%%%%%%%%%%%%%%%%%%%%%%%%%%%%%%%%%%%%%%%%%%%%%%%%%%%%%%%%%%%%%%%%%%%%%%%%%%%%%%%%%%%%%%%%%%%%
\section{BSDEs Driven by a Multi-Dimensional Martingale} \lab{sect3}
%%%%%%%%%%%%%%%%%%%%%%%%%%%%%%%%%%%%%%%%%%%%%%%%%%%%%%%%%%%%%%%%%%%%%%%%%%%%%%%%%%%%%%%%%%%%%%%
%%%%%%%%%%%%%%%%%%%%%%%%%%%%%%%%%%%%%%%%%%%%%%%%%%%%%%%%%%%%%%%%%%%%%%%%%%%%%%%%%%%%%%%%%%%%%%%

In this section, we first revisit results from \cite{CFS-2008,ELH-1997} and we establish in Section \ref{sect3.1} their extensions to the case when an additional driving term $U$ appears in our BSDE under the $m$-Lipschitz condition  \eqref{Lipschitz for the driver}. In Section \ref{sect3.2}, we study the special case when the generator satisfies the uniform
$m$-Lipschitz condition  \eqref{Lipschitz for the driverx}.  The goal of Section \ref{sect3.3} is to establish
the main comparison theorem for BSDEs driven by a multi-dimensional martingale (see Theorem \ref{comparison theorem 2x}).

Let $M=(M^{1},M^{2},\ldots,M^{d})^{\ast}$ (by $^{\ast}$, we denote the transposition), where the processes $M^i,\, i=1,2,\dots , d$ are continuous, square-integrable martingales on the filtered probability space $(\Omega, {\cal G}, \gg , \P)$. We postulate
that $M$ has the predictable representation property with respect to the filtration $\gg$ under $\P$. We denote
by $\langle M \rangle$ the quadratic (cross-) variation process of $M$, so that $\langle M \rangle_{t}$ takes values in
$\rr^{d \times d}$ and the $(i,j)$th entry of the matrix $\langle M \rangle_{t}$ is $\langle M^{i},M^{j}\rangle_{t}$.
As in \cite{CFS-2008,ELH-1997}, we henceforth work under the following standing assumption regarding the continuous
process of finite variation $\langle M \rangle$, so that it is implicitly assumed that the processes $m$ and $Q$
in equation \eqref{mm33} are given.

\bhyp  \label{edded}
We assume that there exists an $\rr^{d \times d}$-valued $\gg$-adapted process $m$ and a $\gg$-adapted, continuous,
bounded, increasing process $Q$ with $Q_0=0$ such that, for all $t \in [0,T]$,
\be \label{mm33}
\langle M\rangle_{t}=\int_{0}^{t} m_{u} m_{u}^{\ast}\, dQ_{u}.
\ee
\ehyp

From Proposition 2.9 in Chapter II of Jacod and Shiryaev \cite{JS-2003} (see also \cite{ELH-1997,MW-2012,M-2009}), we know that Assumption  \ref{edded} is met by an arbitrary continuous, square-integrable martingale and the factorization \eqref{mm33} of the process $\langle M\rangle$ is not unique. For instance, if we set $Q:=\arctan(\sum_{i=1}^{d}\langle M^{i},M^{i}\rangle)$, then $Q$ is $\gg$-adapted, continuous, increasing process, which is bounded by $\frac{\pi}{2}$. Moreover, the Kunita-Watanabe inequality shows that for all $1\leq i,j\leq d$ the process $\langle M^{i}, M^{j}\rangle$ is absolute continuous with respect to $Q$, and thus the Radon-Nikodym theorem allows us to obtain an $\rr^{d \times d}$-valued, $\gg$-predictable process $D$, which is positive semi-definite. Furthermore, we can factorize $D$ as $D=mm^{\ast}$ for an $\rr^{d \times d}$-valued, $\gg$-predictable processes $m$. In particular, if $M$ is a $d$-dimensional Brownian motion, then we can simply choose $Q_{t}=t$ for all $t\in[0,T]$ and $m=I$, where $I$ stands for the $d$-dimensional identity matrix.

Let us make some comments on alternative technical assumptions regarding the measurability of $m$. In some papers, such as \cite{ELH-1997,MW-2012,M-2009}, the authors take $m$ as a predictable process, which can be constructed as above, for instance.  However, on the one hand, usually it is sufficient to take $m$ as an adapted process to obtain the well-posedness of BSDEs. On the other hand, when we consider the stochastic integral with respect to $M$, where $m$ appears in the integrand,  usually it suffices that $m$ is progressively measurable. In particular, if $Q_{t}=t$ (which, obviously, implies that the process $\langle M\rangle $ is absolutely continuous with respect to the Lebesgue measure), then it is enough to postulated that $m$ is adapted (for more details, see Remark 2.11 in Chapter 3 of  Karatzas and Shreve \cite{KS-1998}).

It is also clear that $m_{u}m_{u}^{\ast}$ is a square matrix and it is positive semi-definite, so that $(m_{u}m_{u}^{\ast})^{\frac{1}{2}}$ is well-defined (for notation, see Remark \ref{notat55}). If $m_{u}m_{u}^{\ast}$ is positive definite, then $m_{u}m_{u}^{\ast}$ is invertible, and thus we can also define $(m_{u}m_{u}^{\ast})^{-\frac{1}{2}}$. Finally,
 we note that if $m_u$ is a symmetric matrix (i.e., $m_u=m_u^{\ast}$), then $m_{u}=m^{\ast}_{u}=(m_{u}m_{u}^{\ast})^{\frac{1}{2}}$.

Regarding the symmetry of $m$ in \eqref{mm33}, observe that the condition $\langle M\rangle_{t}=\int_{0}^{t}m_{u}m_{u}^{\ast}\,dQ_{u}$
with $m$ being an $\rr^{d \times d}$-valued, $\gg$-adapted process is equivalent to
$\langle M\rangle_{t}=\int_{0}^{t}\bar{m}_{u}\bar{m}_{u}^{\ast}\,dQ_{u}$ with $\bar{m}$ being a symmetric
$\rr^{d \times d}$-valued, $\gg$-adapted process. Indeed, it suffices to take $\bar{m}_{u}=(m_{u}m_{u}^{\ast})^{\frac{1}{2}}$.
Therefore, without loss of generality, we may and do assume that $m$ takes values in the space of symmetric matrices.

\brem \label{notat55}
We denote by $\norm a \norm $ the norm of a $d \times d$ matrix $a$, where $\norm a \norm^2 :=\text{Tr}(a a^{\ast})$. For a
positive semi-definite matrix $a$, we denote by $a^{\frac{1}{2}}$ the unique square root of $a$, i.e., $a^{\frac{1}{2}}a^{\frac{1}{2}}=a$. Recall that there exists an orthogonal matrix $O$ and a diagonal matrix $b$ with non-negative diagonal elements such that $a=O^{\ast}b O$. The square root of $b$ is also a diagonal matrix, denoted by $b^{\frac{1}{2}}$, with diagonal elements equal to square roots of diagonal elements of $b$. Then we set $a^{\frac{1}{2}}=O^{\ast}b^{\frac{1}{2}}O$. Moreover, if $a$ is positive definite, then the inverse of $a^{\frac{1}{2}}$, denoted as $a^{-\frac{1}{2}}$, is well defined.
\erem

%%%%%%%%%%%%%%%%%%%%%%%%%%%%%%%%%%%%%%%%%%%%%%%%%%%%%%%%%%%%%%%%%%%%%%%%%%%%%%%%%%%%%%%%%%%%%%%
\subsection{BSDEs with an $m$-Lipschitzian Generator}   \label{sect3.1}
%%%%%%%%%%%%%%%%%%%%%%%%%%%%%%%%%%%%%%%%%%%%%%%%%%%%%%%%%%%%%%%%%%%%%%%%%%%%%%%%%%%%%%%%%%%%%%%

In this section, we study the following BSDE driven by a $d$-dimensional martingale $M = (M^1,\dots , M^d)^*$
and a real-valued, $\gg$-adapted process $U$, for $t \in [0,T]$,
\be \label{special BSDE}
\left\{
\begin{array}
[c]{ll}
dY_t  = Z_t^{\ast} \, dM_t -h(t, Y_t , Z_t )\,dQ_{t} + dU_t,\medskip\\
Y_{T}=\eta .
\end{array}
\right.
\ee
Recall that we work under the standing assumption that $M$ satisfies Assumption \ref{edded}. We also make throughout
the following technical assumption regarding the measurability of the generator $h$.
%The relationship between conditions (\ref{uniformly Lipschitz for the driver}) and (\ref{Lipschitz for the driver}) is examined in %Lemma \ref{lemma for different generator} below.

\bhyp  \label{assumption standard generator}
Let $h:\Omega \times[0,T] \times \rr \times \rr^{d} \rightarrow \rr$ be a ${\cal G}\otimes\mathcal{B}([0,T])\otimes\mathcal{B}(\rr)\otimes\mathcal{B}(\rr^d)$-measurable function such that $h(\cdot,\cdot,y,z)$ is a $\gg$-adapted process for any fixed $(y,z)\in\rr \times \rr^d$.
\ehyp

Before stating the definition of a solution
to BSDE \eqref{special BSDE}, we need to introduce some notation and define suitable spaces of processes in which will search
for solutions. To this end, following \cite{CFS-2008}, we first introduce the following version of the
Lipschitz condition for the generator $h$.

\bd \label{mlip}
We say that $h$ satisfies the {\it $m$-Lipschitz condition} if there exist two strictly positive and $\gg$-adapted processes $\rho $ and $\theta$ such that, for all $t\in[0,T]$ and $y_{1},y_{2}\in\rr,\, z_{1},z_{2}\in\rr^d$,
\be \label{Lipschitz for the driver}
|h(t,y_{1},z_{1})-h(t,y_{2},z_{2})|\leq \rho_{t}|y_{1}-y_{2}|+\theta_{t}\| m_{t}^{\ast}(z_{1}-z_{2})\|.
\ee
\ed

We set $\alpha_{t}^{2}=\rho_{t}+\theta_{t}^{2}$ for $t \in [0,T]$ and we define
the process $N_{t}:=\int_{0}^{t} \alpha_{u}^{2} \,dQ_{u}$ for $t \in [0,T]$.
For a fixed $\lambda\ge0$, we denote by $\wHlamd$ the subspace of all $\rr^{d}$-valued, $\gg$-adapted processes $X$ with the norm  $\|\cdot \|_{\wHlamd }$ given by
\be \label{defhh}
\|X\|_{\wHlamd}^{2}:=\EP \bigg[ \int_{0}^{T}e^{\lambda N_{t}}\|X_{t}\|^{2}\,dQ_{t} \bigg] <\infty .
\ee
Let $\wh{L}^{2}_{\lambda}$ stand for the space of all real-valued, $\mathcal{G}_{T}$-measurable
random variables $\eta$ such that
\be \label{defhhh}
\|\eta \|_{\wh{L}^2_{\lambda}}^{2}=\EP \left[e^{\lambda N_{T}}\eta^{2}\right]<\infty .
\ee
It is clear that the spaces in which we will search for solutions depend on the generator $g$.
In fact, from the next definition, it transpires that the process $m$ is also used for this purpose.

\bd
A {\it solution} to BSDE (\ref{special BSDE}) is a pair $(Y,Z)$ of $\gg$-adapted processes satisfying (\ref{special BSDE}) for all
$t \in [0,T]$ and such that: $(\alpha Y, m^{\ast}Z)\in\wHlamo  \times \wHlamd$, the process $Z$ is $\gg$-predictable, and the process $Y-U$ is continuous.
\ed

Note that if, in addition, the process $U$ is $\gg$-progressively measurable (resp. $\gg$-predictable), then $Y$ is $\gg$-progressively measurable (resp. $\gg$-predictable) as well.

One can object that the definition of a solution to BSDE (\ref{special BSDE}) under the $m$-Lipschitz condition
is somewhat artificial, since it is tailored to the method of the proof of the existence and uniqueness theorem
(see, for instance, Theorem 3.2 in \cite{CFS-2008}). In the next subsection, we will impose a stronger uniform $m$-Lipschitz condition and we will reduce the complexity of the definition of a solution to BSDE (\ref{special BSDE}).

The next definition is a counterpart of Definition \ref{stand} of a $(\lambda ,L)$-standard parameter.
To be more precise, we deal here with the notion of the $(\lambda , m ,Q, \rho , \theta )$-{\it standard parameter}
but, for the sake of conciseness, we decided to call it simply a $(\lambda , m )$-{\it standard parameter}.

\bd \lab{defstan}
We say that the triplet $(h,\eta ,U)$ is a $(\lambda , m )$-{\it standard parameter} if: \hfill \break
(i)  $h:\Omega \times [0,T] \times \rr \times \rr^{d} \rightarrow \rr$ satisfies the $m$-Lipschitz
condition with processes $\rho $ and $\theta $, \hfill \break
(ii) the process $\alpha^{-1}h(\cdot,0,0)$ belongs to $\wHlamo$, \hfill \break
(iii) a random variable $\eta $ belongs to $\wh{L}^{2}_{\lambda}$, \hfill \break
(iv) $U$ is a real-valued, $\gg$-adapted process
such that $\alpha U\in\wHlamo$ and $U_{T}\in \wh{L}^{2}_{\lambda}$.
\ed

The proof of the next result hinges on Theorem 3.2 in Carbone et al. \cite{CFS-2008}.

\begin{theorem} \label{1 wellposedness theorem for special BSDE}
Let $(h,\eta ,U)$ be a $(\lambda , m)$-standard parameter for some $\lambda>3$.
Then BSDE (\ref{special BSDE}) has a unique solution $(Y,Z)$ such that $(\alpha Y, m^{\ast}Z)\in\wHlamo
 \times \wHlamd $. Moreover, the process $Y-U$ satisfies
\bde % \label{ineq11}
\EP \bigg[\sup_{t\in[0,T]}e^{\lambda N_{t}}(Y_{t}-U_{t})^{2} \bigg]<\infty.
\ede
\end{theorem}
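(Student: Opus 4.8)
The plan is to reduce BSDE \eqref{special BSDE}, which carries the extra driving term $U$, to a BSDE of the form treated (with $U=0$) in Theorem 3.2 of Carbone et al. \cite{CFS-2008}, exactly as was done in the one-dimensional setting in the previous section. Concretely, I would set $\wh Y_t := Y_t - U_t$ and observe that $\wh Y$ satisfies
\be
d\wh Y_t = Z_t^{\ast}\, dM_t - \wh h(t, \wh Y_t, Z_t)\, dQ_t, \qquad \wh Y_T = \wh\eta,
\ee
where $\wh\eta := \eta - U_T$ and $\wh h(t, \wh y, z) := h(t, \wh y + U_t, z)$. The whole argument then amounts to checking that $(\wh h, \wh\eta)$ is again a $(\lambda, m)$-standard parameter for the $U=0$ problem, invoking \cite{CFS-2008} to solve it, and transferring the conclusions back to $(Y,Z)$ via $Y := \wh Y + U$.

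First I would verify the four defining properties of Definition \ref{defstan} for $(\wh h, \wh\eta)$. The $m$-Lipschitz condition \eqref{Lipschitz for the driver} is immediate, since the shift by $U_t$ in the $y$-argument cancels in the difference; thus $\wh h$ inherits \eqref{Lipschitz for the driver} with the same processes $\rho$ and $\theta$, hence with the same $\alpha$ and $N$. Condition (iii) is equally clear: $\wh L^{2}_{\lambda}$ is a linear space and both $\eta$ and $U_T$ belong to it, so $\wh\eta \in \wh L^{2}_{\lambda}$, and similarly $\alpha U \in \wHlamo$ is available by hypothesis for later use.

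The only step requiring a short computation is condition (ii), namely $\alpha^{-1}\wh h(\cdot,0,0) \in \wHlamo$. Here $\wh h(t,0,0) = h(t, U_t, 0)$, and the $m$-Lipschitz estimate gives
\be
\alpha_t^{-1}|h(t, U_t, 0)| \le \alpha_t^{-1}|h(t,0,0)| + \alpha_t^{-1}\rho_t |U_t|.
\ee
Since $\alpha_t^{2} = \rho_t + \theta_t^{2} \ge \rho_t$, one has $\alpha_t^{-1}\rho_t \le \alpha_t$, so the second term is dominated by $\alpha_t |U_t|$; as $\alpha^{-1}h(\cdot,0,0) \in \wHlamo$ and $\alpha U \in \wHlamo$, the right-hand side lies in $\wHlamo$. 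This is really the crux of the matter and the only point where the interplay between $\rho$, $\theta$ and $\alpha$ is used.

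With $(\wh h, \wh\eta)$ confirmed to be a $(\lambda, m)$-standard parameter and $\lambda > 3$, Theorem 3.2 of \cite{CFS-2008} yields a unique solution $(\wh Y, Z)$ with $(\alpha\wh Y, m^{\ast}Z) \in \wHlamo \times \wHlamd$, with $Z$ predictable, $\wh Y$ continuous, and the sup-bound $\EP[\sup_{t\in[0,T]} e^{\lambda N_t}\wh Y_t^{2}] < \infty$. I would then set $Y := \wh Y + U$: continuity of $Y - U = \wh Y$ and the claimed sup estimate for $Y - U$ follow at once, while $\alpha Y \in \wHlamo$ follows from $\alpha\wh Y, \alpha U \in \wHlamo$, so $(Y,Z)$ is a solution of \eqref{special BSDE}. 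Uniqueness transfers because the map $(Y,Z) \mapsto (Y-U, Z)$ is a bijection between the solution sets of the two equations. I expect no genuine obstacle beyond the bookkeeping in condition (ii); the substantive content—existence, uniqueness, and the a priori estimates for the $U=0$ equation—is entirely supplied by \cite{CFS-2008}.
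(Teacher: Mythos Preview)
Your proposal is correct and follows essentially the same route as the paper: substitute $\wh Y = Y - U$, verify that $(\wh h,\wh\eta)$ is again a $(\lambda,m)$-standard parameter, apply Theorem~3.2 of \cite{CFS-2008}, and shift back. In fact you are slightly more careful than the paper in checking condition~(ii), since you explicitly use the inequality $\alpha_t^{-1}\rho_t\le\alpha_t$ to bound $\alpha^{-1}\wh h(\cdot,0,0)$, whereas the paper's proof glosses over the~$\alpha^{-1}$ factor at that step.
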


\begin{proof}
We set $\wh Y_t := Y_t - U_t$, so that  (\ref{special BSDE}) becomes
\be \label{transferred special BSDE}
\wh Y_t  = \wh \eta - \int_t^T Z^*_u \, dM_u + \int_t^T \wh h(t, \wh Y_u , Z_u )\, dQ_{u}
\ee
where $\wh \eta := \eta - U_T$ and $\wh h(t,\wh Y_t, Z_t ) : =h(t,\wh Y_t + U _t , Z_t )$. Since $h(\cdot,0,0)\in\wHlamo$, $\alpha U\in\wHlamo$ and (\ref{Lipschitz for the driver}), we have $\wh h(\cdot,0, 0)=h(\cdot,U, 0)\in\wHlamo$. Moreover, it is easy to check that $\wh \eta \in \wh{L}^{2}_{\lambda}$ and the function $\wh h$ satisfies (\ref{Lipschitz for the driver}). Therefore, $(\wh{h}, \wh{\eta} )$ is a $(\lambda , m)$-standard parameter as well. Consequently, from Theorem 3.2 in \cite{CFS-2008}, we deduce that BSDE (\ref{transferred special BSDE})
has a unique solution $(\wh{Y},Z)$ such that $(\alpha \wh{Y}, m^{\ast}Z)\in\wHlamo \times \wHlamd$. Moreover, $\wh{Y}$ is continuous and satisfies $\EP \big[\sup_{t\in[0,T]}e^{\lambda N_{t}}\wh{Y}^2_{t} \big]<\infty$.
It is now not hard to check that $(Y,Z)$ with $Y:=\wh Y + U$ is a unique solution to BSDE (\ref{special BSDE}) and $(\alpha Y, m^{\ast}Z) \in \wHlamo  \times \wHlamd $, since $\alpha \wh{Y} \in\wHlamo$ and $\alpha U\in\wHlamo$.
\end{proof}

We also have the following stability result, which extends Proposition 3.1 in \cite{CFS-2008}.

\bp \label{1 stability of BSDE}
Let $(h^i,\eta^i,U^i)$ be a $( \lambda , m)$-standard parameter with $\lambda>3$ and let $(Y^i,Z^i)$
be the solution to the following BSDE, for $i=1,2$,
\be\label{special BSDE 2}
\left\{
\begin{array}
[c]{ll}
dY_t^{i}  =Z^{i,\ast}_t \, dM_t - h^{i}(t, Y^{i}_t , Z^{i}_t )\,dQ_{t} + dU_t^{i},\medskip\\
Y_{T}^{i}=\eta^{i}.
\end{array}
\right.
\ee
If we denote $Y=Y^{1}-Y^{2}$, $Z=Z^{1}-Z^{2}$, $U=U^{1}-U^{2}$, $\eta=\eta^{1}-\eta^{2}$ and  $h=h^{1}-h^{2}$, then
\bde % \label{stability inequality}
\EP \bigg[\sup_{t\in[0,T]}e^{\lambda N_{t}}(Y_{t}-U_{t})^{2}\bigg]+\|\alpha Y\|^2_{\wHlamo}+\|m^{\ast}Z\|^2_{\wHlamd }
\leq K_{1}\Delta
\ede
where $K_1$ is a constant and
\bde
\Delta=\left[ \|\eta-U_{T}\|^{2}_{\wh{L}^2_{\lambda}}+\|\alpha U\|^2_{\wHlamo }
+ \| \alpha^{-1}h(t,Y^{2}-U^{2},Z^{2})\|^2_{\wHlamo}\right].
\ede
\ep

\begin{proof}
From Theorem \ref{wellposedness theorem for special BSDE}, we know that there exists a unique solution $(Y^{i},Z^{i})$ of BSDEs (\ref{special BSDE 2}) for $i=1,2$. Let $\wh{Y}^{i}=Y^{i}-U^{i}$ and $\wh{Y}=\wh{Y}^{1}-\wh{Y}^{2}$.
Then, similarly as in the proof of Proposition 3.1 in \cite{CFS-2008}, we obtain (note that the value of a constant $K_{1}$ may vary from place to place in what follows)
\be\label{estimate 1}
\|\alpha\wh{Y}\|_{\wHlamo}^{2}+\|m^{\ast}Z\|_{\wHlamd }^{2}
\leq K_{1}\left[\|\eta-U_{T}\|^{2}_{\wh{L}^2_{\lambda}}+\|\alpha U\|_{\wHlamo}^{2}+
\|\alpha^{-1}h(t,\wh{Y}^{2},Z^{2})\|_{\wHlamo }^{2}\right]
\ee
and thus $\|\alpha Y\|_{\wHlamo }^{2} +\|m^{\ast}Z\|_{\wHlamd }^{2} \leq K_{1}\Delta $. We will now show that
\be \label{dece}
\EP \bigg[\sup_{t\in[0,T]}e^{\lambda N_{t}} \wh{Y}^2_t \bigg]\leq K_{1}\Delta.
\ee
Indeed, since
\bde
\wh{Y}_t=\eta-U_{T}-\int_{t}^{T}Z^{\ast}_{u}\,dM_{u} +\int_{t}^{T}
(h^{1}(u, Y^{1}_{u} , Z^{1}_{u} )-h^{2}(u, Y^{2}_{u} , Z^{2}_{u} ))\,dQ_{u},
\ede
an application of the It\^o formula to $e^{\lambda N_{t}}\wh{Y}^2_t$ yields
\begin{align*}
e^{\lambda N_{t}} \wh{Y}_t^{2}&=e^{\lambda N_{T}}(\eta-U_{T})^{2}
-2\int_{t}^{T}e^{\lambda N_{u}}\wh{Y}_{u}Z^{\ast}_{u} \,dM_{u}-\lambda\int_{t}^{T}e^{\lambda N_{u}}\alpha^2_u \wh{Y}_{u}^{2}\, dQ_{u} \\ &\ \ \ +2\int_{t}^{T} e^{\lambda N_{u}}\wh{Y}_{u}(h^{1}(u, Y^{1}_{u} , Z^{1}_{u} )-h^{2}(u, Y^{2}_{u} , Z^{2}_{u} ))\,dQ_{u}.
\end{align*}
The Burkholder-Davis-Gundy inequality and standard calculus yield
\bde
\EP \bigg[\sup_{t\in[0,T]} e^{\lambda N_{t}}\wh{Y}^2_t \bigg] \leq K_{1}\left[\Delta+\|\alpha\wh{Y}\|_{\wHlamo }^{2}
+\|m^{\ast}Z\|_{\wHlamd }^{2}\right].
\ede
From (\ref{estimate 1}), it now follows that \eqref{dece} is valid, which completes the proof of the proposition.
\end{proof}

%%%%%%%%%%%%%%%%%%%%%%%%%%%%%%%%%%%%%%%%%%%%%%%%%%%%%%%%%%%%%%%%%%%%%%%%%%%%%%%%%%%%%%%%%%%%%%%
\subsection{BSDEs with a Uniformly $m$-Lipschitzian Generator}   \label{sect3.2}
%%%%%%%%%%%%%%%%%%%%%%%%%%%%%%%%%%%%%%%%%%%%%%%%%%%%%%%%%%%%%%%%%%%%%%%%%%%%%%%%%%%%%%%%%%%%%%%

Our next goal is to analyze alternative assumptions under which the existence, uniqueness, and comparison theorems for
BSDE \eqref{special BSDE} can be established. To this end, we consider the case where the processes $\rho$ and $\theta$ in (\ref{Lipschitz for the driver}) are bounded, that is, there exists a constant $\wh{L} >0$ such that $0\leq \rho_{t},\theta_{t}\leq \wh{L}$ for all $t\in[0,T]$.  Since the process $Q$ in Assumption \ref{edded} is bounded, under the assumption that the processes $\rho$ and $\theta$ are bounded as well, the process $N$ is bounded and thus the classes of processes and random variables
satisfying the inequalities \eqref{defhh} and \eqref{defhhh} do not depend on the choice of $\lambda$. In other words, the sets $\wHlamd $ and $\wh{L}^{2}_{\lambda}$ are independent of $\lambda$.
Hence we may take $\lambda=0$ and, since the norms $\|\cdot\|_{\wHlamd }$ and $\|\cdot\|_{\wHzerd}$ are equivalent, for our further purposes, the space $\wHlamd $ may be formally identified with $\wHzerd $. Similarly, the norms $\|\cdot\|_{\wh{L}^2_{\lambda}}$ and $\|\cdot\|_{\wh{L}^2_{0}}$ are equivalent in that case, so that we may identify the spaces $\wh{L}^{2}_{\lambda}$ and $\wh{L}^{2}_{0} = L^2 (\rr)$. Finally, we observe that the norms $\|\cdot\|_{\wHzerd }$ and $\|\cdot\|_{\wh{L}^2_{0}}$ are obviously independent of $N$ (thus also of $\rho$ and $\theta$).

To summarize, if the processes $\rho$ and $\theta$ are bounded, then the spaces $\wHlamd$ and $\wh{L}^{2}_{\lambda}$ do not depend on $\lambda,\, \rho$ and $\theta$.  Therefore, if the processes $\rho$ and $\theta$ in the $m$-Lipschitz condition for $h$ are bounded, then we may assume, without loss of generality, that $\rho_{t} =\theta_{t} = \wh{L}$ for all $t\in[0,T]$. Then the process $\alpha = \wh{L}+ \wh{L}^2 $ is constant as well.

\bd \label{assumption 1 for the driver of the special BSDE}
We say that $h$ satisfies the {\it uniform $m$-Lipschitz condition} if there exists a constant $ \wh{L} >0$ such that,
for all $t\in[0,T]$ and all $y_{1},y_{2}\in\rr,\, z_{1},z_{2}\in\rr^d$,
\be \label{Lipschitz for the driverx}
|h(t,y_{1},z_{1})-h(t,y_{2},z_{2})|\leq \wh{L} \big( |y_{1}-y_{2}|+ \| m_{t}^{\ast}(z_{1}-z_{2})\| \big). %, \quad \P-\aass
\ee
\ed

In view of Definition \ref{assumption 1 for the driver of the special BSDE} and the preceding discussion regarding
the spaces $\wHlamd$ and $\wHzerd$, we propose the following modification of Definition \ref{defstan}.

\bd \label{defstanx}
We say that the triplet $(h,\eta ,U)$ is an $(m ,\wh{L})$-{\it standard parameter} if: \hfill \break
(i) $h:\Omega \times [0,T] \times \rr \times \rr^{d} \rightarrow \rr$ satisfies the
uniform $m$-Lipschitz condition with a constant $\wh{L}$, \hfill \break
(ii) the process $h(\cdot,0,0)$ belongs to $\wHzero $, \hfill \break
(iii) a random variable $\eta $ belongs to $\wh{L}^{2}_{0}$, \hfill \break
(iv) $U$ is a real-valued, $\gg$-adapted process
such that $U \in \wHzero$ and $U_{T}\in \wh{L}^{2}_{0}$.
\ed

By a rather straightforward application Theorem \ref{1 wellposedness theorem for special BSDE} and Proposition \ref{1 stability of BSDE}, we obtain the following results for BSDEs with generators satisfying the uniform $m$-Lipschitz condition.
The proofs of both results are almost immediate and thus they are omitted.

\begin{theorem} \label{wellposedness theorem for special BSDE}
If $(h,\eta ,U)$ is an $(m ,\wh{L} )$-standard parameter, then BSDE (\ref{special BSDE}) has a unique
solution $(Y,Z)$ such that $(Y, m^{\ast}Z) \in \wHzero  \times \wHzerd $. Moreover, the process $Y-U$ satisfies
%$\EP \big[\sup_{t\in[0,T]}(Y_{t}-U_{t})^{2} \big]<\infty $.
\bde % \label{ineq11x}
\EP \bigg[\sup_{t\in[0,T]}(Y_{t}-U_{t})^{2} \bigg] < \infty.
\ede
\end{theorem}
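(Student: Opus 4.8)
The plan is to reduce the uniform $m$-Lipschitz case to the $m$-Lipschitz case already covered by Theorem \ref{1 wellposedness theorem for special BSDE}, exploiting the discussion that immediately precedes Definition \ref{assumption 1 for the driver of the special BSDE}. First I would observe that an $(m,\wh{L})$-standard parameter, in the sense of Definition \ref{defstanx}, is a special instance of a $(\lambda,m)$-standard parameter in the sense of Definition \ref{defstan}. Indeed, by the preliminary remarks, if $h$ satisfies the uniform $m$-Lipschitz condition \eqref{Lipschitz for the driverx} with constant $\wh{L}$, then it satisfies the $m$-Lipschitz condition \eqref{Lipschitz for the driver} with the constant processes $\rho_t \equiv \wh{L}$ and $\theta_t \equiv \wh{L}$, so that $\alpha_t^2 = \wh{L} + \wh{L}^2$ is a strictly positive constant. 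The key point is that, because $Q$ is bounded (Assumption \ref{edded}) and $\alpha$ is constant, the process $N_t = \int_0^t \alpha_u^2\, dQ_u = \alpha^2 Q_t$ is bounded, and hence, as already argued, the weighted spaces $\wHlamd$ and $\wh{L}^2_\lambda$ coincide with $\wHzerd$ and $\wh{L}^2_0$ respectively, with equivalent norms, for \emph{every} $\lambda \ge 0$.

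The next step is to verify that the four conditions of Definition \ref{defstanx} indeed translate into the four conditions of Definition \ref{defstan} for any chosen $\lambda$. Condition (i) is immediate since \eqref{Lipschitz for the driverx} implies \eqref{Lipschitz for the driver} with the constant $\rho, \theta$ above. For conditions (ii), (iii) and (iv), I would invoke the space identifications: since $\alpha$ is a nonzero constant, the membership $h(\cdot,0,0) \in \wHzero$ is equivalent to $\alpha^{-1} h(\cdot,0,0) \in \wHzero = \wHlamo$, and likewise $U \in \wHzero$ is equivalent to $\alpha U \in \wHlamo$; similarly $\eta \in \wh{L}^2_0 = \wh{L}^2_\lambda$ and $U_T \in \wh{L}^2_0 = \wh{L}^2_\lambda$. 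Thus $(h,\eta,U)$ is a $(\lambda,m)$-standard parameter. Choosing any fixed $\lambda > 3$ (for instance $\lambda = 4$) then lets me apply Theorem \ref{1 wellposedness theorem for special BSDE} directly.

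Finally, I would transfer the conclusion back. Theorem \ref{1 wellposedness theorem for special BSDE} yields a unique solution $(Y,Z)$ with $(\alpha Y, m^{\ast}Z) \in \wHlamo \times \wHlamd$ and the estimate
\bde
\EP \bigg[\sup_{t\in[0,T]}e^{\lambda N_{t}}(Y_{t}-U_{t})^{2} \bigg]<\infty.
\ede
Because $\alpha$ is a nonzero constant and $N$ is bounded, $(\alpha Y, m^\ast Z) \in \wHlamo \times \wHlamd$ is equivalent to $(Y, m^\ast Z) \in \wHzero \times \wHzerd$, giving the asserted solution space; and since $e^{\lambda N_t}$ is bounded above and below by positive constants, the displayed weighted supremum estimate is equivalent to the unweighted bound $\EP\big[\sup_{t\in[0,T]}(Y_t - U_t)^2\big] < \infty$. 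Uniqueness is inherited verbatim from Theorem \ref{1 wellposedness theorem for special BSDE}, since the solution classes coincide.

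I do not anticipate a genuine obstacle here: the result is, as the authors themselves indicate, an almost immediate corollary, and the only mild care needed is the bookkeeping that the norm equivalences are applied consistently in both directions (from Definition \ref{defstanx} into the hypotheses of Theorem \ref{1 wellposedness theorem for special BSDE}, and from its conclusion back into the statement). The one point worth stating explicitly, rather than grinding through, is precisely \emph{why} the space and norm equivalences hold, namely the boundedness of $N$ that follows from the boundedness of $Q$ together with the constancy of $\alpha$ under the uniform $m$-Lipschitz condition.
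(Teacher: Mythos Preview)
Your proposal is correct and follows precisely the route the paper intends: the authors omit the proof, noting only that it is an almost immediate consequence of Theorem \ref{1 wellposedness theorem for special BSDE} together with the norm-equivalence discussion preceding Definition \ref{assumption 1 for the driver of the special BSDE}. Your bookkeeping---taking $\rho_t=\theta_t=\wh{L}$ so that $\alpha$ is a positive constant, using the boundedness of $Q$ to conclude $N$ is bounded and hence $\wHlamd=\wHzerd$, $\wh{L}^2_\lambda=\wh{L}^2_0$ for all $\lambda\ge0$, and then choosing any $\lambda>3$---is exactly what is needed to fill in the omitted argument.
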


\bp \label{stability of BSDE}
Let $(h^i,\eta^i,U^i)$ be an $(m,\wh{L}_i)$-standard parameter and let $(Y^i,Z^i)$
be the solution to the following BSDE, for $i=1,2$,
\be \label{special BSDE 2x}
\left\{
\begin{array}
[c]{ll}
dY_t^{i}  =Z^{i,\ast}_t \, dM_t - h^{i}(t, Y^{i}_t , Z^{i}_t )\,dQ_{t} + dU_t^{i},\medskip\\
Y_{T}^{i}=\eta^{i}.
\end{array}
\right.
\ee
If we denote $Y=Y^{1}-Y^{2}$, $Z=Z^{1}-Z^{2}$, $U=U^{1}-U^{2}$, $\eta=\eta^{1}-\eta^{2}$ and  $h=h^{1}-h^{2}$, then
\bde % \label{stability inequality1}
\EP \Big[\sup_{t\in[0,T]}(Y_{t}-U_{t})^{2}\Big] + \|Y\|_{\wHzero }^{2}
+ \|m^{\ast}Z\|_{\wHzerd }^{2} \leq K_{1}\widetilde{\Delta}
\ede
where $K_1$ is a constant and
\bde
\widetilde{\Delta}=\left[ \|\eta-U_{T}\|^{2}_{\wh{L}^2_{0}}+\|U\|_{\wHzero}^{2}+
\|h(t,Y^{2}-U^{2},Z^{2})\|_{\wHzero }^{2}\right].
\ede
\ep

%%%%%%%%%%%%%%%%%%%%%%%%%%%%%%%%%%%%%%%%%%%%%%%%%%%%%%%%%%%%%%%%%%%%%%%%%%%%%%%%%%%%%%%%%%%%%%%
\subsection{Comparison Theorem: Multi-Dimensional Case}  \label{sect3.3}
%%%%%%%%%%%%%%%%%%%%%%%%%%%%%%%%%%%%%%%%%%%%%%%%%%%%%%%%%%%%%%%%%%%%%%%%%%%%%%%%%%%%%%%%%%%%%%%

Our next goal, and in fact the main motivation for this work, is to extend Theorem \ref{comparison theorem 1} to BSDE (\ref{special BSDE}) driven by a multi-dimensional martingale. It is worth noting that in \cite{MW-2012,T-2008}, the authors established some versions of the comparison theorem for BSDEs with generators satisfying the quadratic growth condition. For this purpose, they
needed to make some additional assumptions. In the paper by Mocha and Westray \cite{MW-2012}, the comparison theorem is proven using the $\theta$-technique under the postulate that $m$, the terminal value $\eta$, and solution $Y^1$ and $Y^2$ have exponential moments of all orders (see Theorem 5.1 in \cite{MW-2012}).  Tevzadze \cite{T-2008} examined the case when the terminal condition is bounded, and he focussed on bounded solutions $Y$ complemented by BMO martingale component. He established the comparison theorem  using the linearization technique under a certain integrability condition imposed on the process $(mm^{\ast})^{-1}\nabla h (\cdot , Y,Z, \wt{Z}) $ (see condition (L.2) in Theorem 2 in \cite{T-2008}). Let us remark that even if the generator is uniformly $m$-Lipschitzian, his result requires the trace $\text{Tr}[(mm^{\ast})^{-1}]$ to be bounded, which is used the ensure the validity of condition (L.2) (see Remark on page 12 in \cite{T-2008}). We will discuss this condition in more detail later on (see Assumption \ref{assumption 2 for m}).

In our framework, when dealing with the BSDEs with a uniformly $m$-Lipschitzian generator, we do not need the boundedness of $\text{Tr}[(mm^{\ast})^{-1}]$ since, by using the linearization technique, we can obtain comparison theorem under standard assumptions. The crucial difference is that in the proof of Theorem \ref{comparison theorem 2x} we take the `density process' $q$ different to the one employed in \cite{T-2008}.
Throughout this subsection, we work under Assumptions \ref{edded}--\ref{assumption standard generator} complemented by
the following standing assumption.

\bhyp \label{inver}
The matrix $m_{t}$ in representation \eqref{mm33} of the process $\langle M\rangle$ is invertible for all $t \in [0,T]$.
\ehyp

The invertibility of $m_{t}$ allows us to define the auxiliary function $\wh{h}(t,y,z):=h(t,y,m^{-1}_{t}z)$ where $h$ is an arbitrary generator satisfying the uniform $m$-Lipschitz condition. Then the function $\wh{h}$ is uniformly Lipschitzian, since \eqref{Lipschitz for the driverx} entails that
\be \label{Lipschitz m for the driver}
|\wh{h}(t,y_{1},z_{1})-\wh{h}(t,y_{2},z_{2})|\leq \wh{L} \big( |y_{1}-y_{2}|+ \| z_{1}-z_{2}\| \big).
\ee
Let us denote $\wt{Z}_{t,i}^{k}=(m_t^{\ast}Z_t^{k})_{i}$ for $k=1,2$ and $i=1,2,\ldots,d$. For every $j=2,\ldots,d$ and $t \in [0,T]$, we define the following processes
\begin{align*}
&\delta_{Y} \wh{h}_{t}=\frac{\wh{h}(t,Y^{1}_{t},\wt{Z}^{1}_{t})-\wh{h}(t,Y^{2}_{t},\wt{Z}^{1}_{t})}
{Y^{1}_{t}-Y^{2}_t}\, \I_{\{Y^{1}_{t}\neq Y^{2}_t\}}, \\
&\delta_{\wt{Z}_{1}} \wh{h}_{t}=\frac{\wh{h}(t,Y^{2}_{t},\wt{Z}^{1}_{t,1},\wt{Z}^{1}_{t,2},\ldots,\wt{Z}^{1}_{t,d})
-\wh{h}(t,Y^{2}_{t},\wt{Z}^{2}_{t,1},\wt{Z}^{1}_{t,2},\ldots,\wt{Z}^{1}_{t,d})}{\wt{Z}^{1}_{t,1}-\wt{Z}^{2}_{t,1}}
\, \I_{\{\wt{Z}^{1}_{t,1}\neq \wt{Z}^{2}_{t,1}\}},\\
&\delta_{\wt{Z}_{j}}\wh{h}_{t}=\frac{\wh{h}(t,Y^{2}_{t},\ldots,\wt{Z}^{2}_{t,j-1},\wt{Z}^{2}_{t,j},
\wt{Z}^{1}_{t,j+1},\ldots,\wt{Z}^{1}_{t,d})
- \wh{h}(t,Y^{2}_{t},\ldots,\wt{Z}^{2}_{t,j-1},\wt{Z}^{1}_{t,j},
\wt{Z}^{1}_{t,j+1},\ldots,\wt{Z}^{1}_{t,d})}{\wt{Z}^{1}_{t,j}-\wt{Z}^{2}_{t,j}}
\, \I_{\{\wt{Z}^{1}_{t,j}\neq \wt{Z}^{2}_{t,j}\}},
%&\delta_{\wt{Z}_{j}}h_{t}=\frac{\wh{h}(t,Y^{2}_{t},\wt{Z}^{2}_{t,1},\ldots,\wt{Z}^{2}_{t,j-1},\wt{Z}^{2}_{t,j},
%\wt{Z}^{1}_{t,j+1},\ldots,\wt{Z}^{1}_{t,d})
%- \wh{h}(t,Y^{2}_{t},\wt{Z}^{2}_{t,1},\ldots,\wt{Z}^{2}_{t,j-1},\wt{Z}^{1}_{t,j},
%\wt{Z}^{1}_{t,j+1},\ldots,\wt{Z}^{1}_{t,d})}{\wt{Z}^{1}_{t,j}-\wt{Z}^{2}_{t,j}}
%\, \I_{\{\wt{Z}^{1}_{t,j}\neq \wt{Z}^{2}_{t,j}\}}.
\end{align*}
and write $\delta_{\wt{Z}}h_{t}:=(\delta_{\wt{Z}_{1}}h_{t},\ldots,\delta_{\wt{Z}_{d}}h_{t})$.
We are now in a position to establish the following comparison theorem in which Assumption \ref{assumption 2 for m} is not postulated.

\begin{theorem} \label{comparison theorem 2x}
We postulate that Assumption \ref{edded} holds with a $\gg$-progressively measurable process $m$ and
Assumptions \ref{assumption standard generator}--\ref{inver} are valid. We consider the following two BSDEs, $i=1,2$,
\be \label{special BSDEi}
\left\{
\begin{array}
[c]{ll}
dY_t^{i} =Z^{i,\ast}_t \, dM_t - h^{i}(t, Y^{i}_t , Z^{i}_t )\,dQ_{t} + dU_t^{i},\medskip\\
Y_{T}^{i}=\eta^{i}.
\end{array}
\right.
\ee
Assume that: \hfill \break
(i) the triplet $(h^i,\eta^i ,U^i)$ is an $(m ,\wh{L}_i)$-standard parameter for $i=1,2$, \hfill \break
(ii) the processes $h^i(\cdot,\cdot,y,z),\, i=1,2$ are $\gg$-progressively measurable for every fixed $(y,z)\in\rr \times \rr^{d}$, \hfill \break
(iii) $U^1$ and $U^2$ are $\gg$-progressively measurable processes such that the process $U^{1}-U^{2}$ is decreasing. \hfill \break
If  $\eta^{1}\ge\eta^{2}$ and $h^{1}(\cdot,Y^{2}, Z^{2})\ge h^{2}(\cdot ,Y^{2}, Z^{2}),\, \P\otimes \Leb$-a.e., then $Y^{1}_{t}\ge Y^{2}_{t}$ for every $t\in[0,T]$.
\end{theorem}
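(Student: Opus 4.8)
The plan is to prove that $Y:=Y^{1}-Y^{2}\ge 0$ by linearising the difference of the two BSDEs and representing $Y$ through a nonnegative conditional expectation. The solutions $(Y^{i},Z^{i})$ exist and are unique by Theorem \ref{wellposedness theorem for special BSDE}, since each $(h^{i},\eta^{i},U^{i})$ is an $(m,\wh{L}_{i})$-standard parameter. Writing $Z:=Z^{1}-Z^{2}$ and $U:=U^{1}-U^{2}$, the pair $(Y,Z)$ solves $dY_{t}=Z_{t}^{\ast}\,dM_{t}-\big(h^{1}(t,Y^{1}_{t},Z^{1}_{t})-h^{2}(t,Y^{2}_{t},Z^{2}_{t})\big)\,dQ_{t}+dU_{t}$ with $Y_{T}=\eta^{1}-\eta^{2}\ge 0$. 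First I would split the generator difference as $h^{1}(t,Y^{1},Z^{1})-h^{2}(t,Y^{2},Z^{2})=\big(\wh{h}^{1}(t,Y^{1},\wt{Z}^{1})-\wh{h}^{1}(t,Y^{2},\wt{Z}^{2})\big)+\xi_{t}$, where $\wh{h}^{1}(t,y,z)=h^{1}(t,y,m_{t}^{-1}z)$, $\wt{Z}^{k}=m^{\ast}Z^{k}$, and $\xi_{t}:=h^{1}(t,Y^{2},Z^{2})-h^{2}(t,Y^{2},Z^{2})\ge 0$, $\P\otimes\Leb$-a.e.; here the identity $\wh{h}^{1}(t,Y^{i},\wt{Z}^{i})=h^{1}(t,Y^{i},Z^{i})$ uses that $m$ may be taken symmetric. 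Inserting the intermediate value $(Y^{2},\wt{Z}^{1})$ and telescoping over the coordinates of $\wt{Z}$ exactly as in the definitions of $\delta_{Y}\wh{h}^{1}_{t}$ and $\delta_{\wt{Z}_{j}}\wh{h}^{1}_{t}$, the first bracket becomes $\delta_{Y}\wh{h}^{1}_{t}\,Y_{t}+(\delta_{\wt{Z}}\wh{h}^{1}_{t})^{\ast}(m_{t}^{\ast}Z_{t})=\delta_{Y}\wh{h}^{1}_{t}\,Y_{t}+\beta_{t}^{\ast}Z_{t}$ with $\beta_{t}:=m_{t}\,\delta_{\wt{Z}}\wh{h}^{1}_{t}$; by \eqref{Lipschitz m for the driver} all of these coefficients are bounded by $\wh{L}_{1}$. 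Thus $Y$ obeys the linear equation $dY_{t}=Z_{t}^{\ast}\,dM_{t}-\big(\delta_{Y}\wh{h}^{1}_{t}\,Y_{t}+\beta_{t}^{\ast}Z_{t}+\xi_{t}\big)\,dQ_{t}+dU_{t}$.

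Next I would eliminate the linear terms by combining a change of measure with a discounting factor. Set $\gamma_{t}:=(m_{t}^{\ast})^{-1}\delta_{\wt{Z}}\wh{h}^{1}_{t}$ (this is the density process referred to above), so that $m_{t}m_{t}^{\ast}\gamma_{t}=\beta_{t}$ and, crucially, $m_{t}^{\ast}\gamma_{t}=\delta_{\wt{Z}}\wh{h}^{1}_{t}$ is bounded by $\sqrt{d}\,\wh{L}_{1}$. Consequently $\langle\int_{0}^{\cdot}\gamma_{u}^{\ast}\,dM_{u}\rangle_{T}=\int_{0}^{T}\|m_{t}^{\ast}\gamma_{t}\|^{2}\,dQ_{t}\le d\wh{L}_{1}^{2}Q_{T}$ is bounded, so by Novikov's criterion $R:=\mathcal{E}(\int_{0}^{\cdot}\gamma_{u}^{\ast}\,dM_{u})$ is a positive, square-integrable (indeed $L^{p}$ for every $p$) $\P$-martingale; this is exactly the point at which we avoid any assumption on $\text{Tr}[(mm^{\ast})^{-1}]$, since only $m^{\ast}\gamma$, and not $\gamma$ itself, need be bounded. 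Setting also $\Gamma_{t}:=\exp\{\int_{0}^{t}\delta_{Y}\wh{h}^{1}_{u}\,dQ_{u}\}$, which is positive and bounded because $Q$ is bounded, and $L:=R\Gamma$, I would apply It\^o's formula to $L_{t}Y_{t}$. Using $d\langle L,Y\rangle_{t}=L_{t}\beta_{t}^{\ast}Z_{t}\,dQ_{t}$ (which follows from $m_{t}^{\ast}\gamma_{t}=\delta_{\wt{Z}}\wh{h}^{1}_{t}$, the decomposition $Y=(Y-U)+U$, and $[L,U]=0$ as $L$ is continuous and $U$ has finite variation), the drift contributions $-L_{t}(\delta_{Y}\wh{h}^{1}_{t}Y_{t}+\beta_{t}^{\ast}Z_{t})$, $Y_{t}L_{t}\delta_{Y}\wh{h}^{1}_{t}$ and $L_{t}\beta_{t}^{\ast}Z_{t}$ all cancel, leaving $d(L_{t}Y_{t})=L_{t}(Z_{t}+Y_{t}\gamma_{t})^{\ast}\,dM_{t}-L_{t}\xi_{t}\,dQ_{t}+L_{t}\,dU_{t}$.

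Integrating from $t$ to $T$ and taking $\P$-conditional expectation, once the stochastic integral $\int_{t}^{T}L_{u}(Z_{u}+Y_{u}\gamma_{u})^{\ast}\,dM_{u}$ is known to be a genuine $\P$-martingale, yields $L_{t}Y_{t}=\EP\big[L_{T}Y_{T}+\int_{t}^{T}L_{u}\xi_{u}\,dQ_{u}-\int_{t}^{T}L_{u}\,dU_{u}\mid\mathcal{G}_{t}\big]$. Here $L>0$, $Y_{T}=\eta^{1}-\eta^{2}\ge 0$, $\xi\ge 0$ with $Q$ increasing, and $U=U^{1}-U^{2}$ is decreasing so that $-\int_{t}^{T}L_{u}\,dU_{u}\ge 0$ pathwise; hence every term on the right-hand side is nonnegative, giving $L_{t}Y_{t}\ge 0$ and therefore $Y_{t}\ge 0$, i.e. $Y^{1}_{t}\ge Y^{2}_{t}$ for every $t\in[0,T]$. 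The progressive measurability hypotheses (ii)--(iii) and the one on $m$ are used to guarantee that $\gamma$, $\Gamma$ and the linearised coefficients are admissible integrands, so that all the stochastic integrals and the change of measure are well defined.

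The main obstacle is precisely the verification that $\int_{0}^{\cdot}L_{u}(Z_{u}+Y_{u}\gamma_{u})^{\ast}\,dM_{u}$ is a true, and not merely local, $\P$-martingale, so that the conditional expectation of its increment vanishes. Its quadratic variation density is $L_{u}^{2}\|m_{u}^{\ast}Z_{u}+Y_{u}\delta_{\wt{Z}}\wh{h}^{1}_{u}\|^{2}\,dQ_{u}$, in which $m^{\ast}Z\in\wHzerd$ and $\EP[\sup_{t}(Y_{t}-U_{t})^{2}]<\infty$ by Theorem \ref{wellposedness theorem for special BSDE}, while $\delta_{\wt{Z}}\wh{h}^{1}$ and $\Gamma$ are bounded and $R_{T}\in L^{p}$ for all $p$. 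I would therefore introduce a localising sequence of stopping times, apply the martingale property to the stopped integrals, and pass to the limit using these $L^{2}$ bounds together with the $L^{p}$-integrability of $R$ (via H\"older's inequality) to secure uniform integrability. This is the step where the boundedness of $\langle\int_{0}^{\cdot}\gamma_{u}^{\ast}\,dM_{u}\rangle_{T}$, rather than of $\text{Tr}[(mm^{\ast})^{-1}]$, carries the essential weight and lets us dispense with the boundedness of $\langle M\rangle_{T}$.
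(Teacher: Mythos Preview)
Your approach is essentially the paper's: the linearisation of $Y=Y^1-Y^2$, the choice of density process (your $L=R\Gamma$ is exactly the paper's $q$, since with $m$ symmetric your $\gamma_t=(m_t^{\ast})^{-1}\delta_{\wt Z}\wh h^{\,1}_t$ coincides with $c^{-1/2}b^{\ast}$), the Novikov argument giving $\EP\big[\sup_t q_t^{2}\big]<\infty$, and the It\^o computation for $L_tY_t$ all match.

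The only place where you diverge, and where your sketch is at risk, is the upgrade from local to true martingale. You propose to control the quadratic variation $\int_0^T L_u^{2}\|m_u^{\ast}Z_u+Y_u\delta_{\wt Z}\wh h^{\,1}_u\|^{2}\,dQ_u$ by pairing the $L^p$-integrability of $R$ against the $L^2$ bounds on $m^{\ast}Z$ and $Y-U$ via H\"older. But these bounds give only $\int_0^T\|m^{\ast}Z\|^{2}\,dQ\in L^{1}$ and $\sup_t(Y_t-U_t)^{2}\in L^{1}$, which is not enough to absorb a factor $\sup_t L_t^{2}$ through H\"older; you would need $\int_0^T\|m^{\ast}Z\|^{2}\,dQ$ in some $L^{q'}$ with $q'>1$, and likewise $\sup_t|Y_t|$ in $L^{p}$ for some $p>2$, neither of which is available. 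The paper sidesteps this completely: it never estimates the quadratic variation, but instead proves $\EP\big[\sup_t|\wh M_t|\big]<\infty$ from the \emph{finite-variation} side of the identity
\[
\wh M_t=L_tY_t-L_0Y_0+\int_0^t L_u h_u\,dQ_u-\int_0^t L_u\,dU_u,
\]
bounding each term by a single Cauchy--Schwarz against $\big(\EP[\sup_t L_t^{2}]\big)^{1/2}$. For $L_tY_t$ one controls $\sup_t Y_t^{2}$ by $\sup_t(Y_t-U_t)^{2}+\sup_t U_t^{2}$, and the latter is finite because $U$ decreasing forces $|U_t|\le|U_0|+|U_T|$ with $U_T\in L^{2}$; the integral $\int_0^t L_uh_u\,dQ_u$ uses $h\in\wHzero$ and the boundedness of $Q$; and $\int_0^t L_u\,dU_u$ is bounded by $|U_T-U_0|\sup_t L_t$. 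These are exactly the ingredients you listed, but deployed on the side of the equation where $L^{1}\times L^{1}$ pairing (via Cauchy--Schwarz at the $L^{2}$ level) actually suffices.
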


\begin{proof}
Since $(h^i,\eta^i ,U^i)$ is an $(m ,\wh{L}_i )$-standard parameter for $i=1,2$,
BSDE (\ref{special BSDEi}) has a unique solution $(Y^i,Z^i)$ such that $(Y^i, m^{\ast}Z^i)\in\wHzero  \times \wHzerd$.
Let us denote
$$
Y=Y^{1}-Y^{2},\ Z=Z^{1}-Z^{2},\ \eta=\eta^{1}-\eta^{2},\ U=U^{1}-U^{2} ,
$$
and $h=h^{1}(\cdot ,Y^{2} , Z^{2})-h^{2}(\cdot ,Y^{2} , Z^{2})$.
% Without loss of generality, we may and do assume that $U_0=U^{1}_0-U^{2}_0=0$.
Also, let us write $a=\delta_{Y}\wh{h}^{1}$ and $b=\delta_{\wt{Z}} \wh{h}^{1}$. Noticing that $\wh{h}(t,y,m_t^{\ast}z)=h(t,y,z)$, we deduce that the pair $(Y,Z)$ solves the following linear BSDE
\bde
\left\{
\begin{array}
[c]{ll}
dY_t =Z_t^{\ast} \, dM_t - (a_{t}Y_{t}+b_{t}m^{\ast}_{t}Z_{t}+h_{t})\,dQ_{t}+dU_{t},\medskip\\
Y_{T}=\eta.
\end{array}
\right.
\ede
Since assumption (i) implies that the generator $h^{1}$ satisfies the uniform $m$-Lipschitz condition \eqref{Lipschitz for the driverx} with a constant $\wh{L}^1$, it is clear that the function $\wh{h}^{1}$ is uniformly Lipschitzian with the same constant and thus the processes
$a$ and $b$ are bounded, specifically, $|a_t|\leq \wh{L}_1$ and $\|b_t \|\leq \sqrt{d} \wh{L}_1$ for all $t \in [0,T]$.

Let us write $c := m m^{\ast }$ and let us define the `density process' $q$ by
the following expression
\bde % \label{condition for comparision theorem 2x}
q_{t}:=\exp\left\{\int_{0}^{t}a_{u}\,dQ_{u}+\int_{0}^{t}b_{u} c_{u}^{-\frac{1}{2}}\, dM_{u}
-\frac{1}{2}\int_{0}^{t}\|b_{u}\|^{2}\, dQ_{u}\right\}.
\ede
From equation (\ref{mm33}), we have $d\langle M\rangle_{t}= c_t \, dQ_t$ and thus
\bde
\Big \langle\int_{0}^{\cdot}b_{u} c_u ^{-\frac{1}{2}}\, dM_{u} \Big\rangle_T
=\int_{0}^{T}\text{Tr}\Big[\big(b_{u}c_u ^{-\frac{1}{2}}\big)^{\ast}
b_{u}c_u ^{-\frac{1}{2}} c_u \Big] \, dQ_u
=\int_{0}^{T}\|b_{u}\|^{2}\, dQ_u\leq d \wh L_1^{2}Q_T.
\ede
Recall that the increasing process $Q$ is assumed to be bounded. Hence $q$ is a strictly positive process and, from Novikov's criterion, the inequality
\be \label{eqqq}
\EP \bigg[ \sup_{t\in[0,T]}q^2_{t} \bigg] < \infty
\ee
is valid. Moreover, an application of the It\^o formula yields
\begin{align*}
dq_{t}&=q_{t}b_{t}c_{t}^{-\frac{1}{2}}\,dM_{t}+q_{t}\Big(a_{t}-\frac{1}{2}\,\|b_{t}\|^{2}\Big)\,dQ_{t}+\frac{1}{2}\,q_{t}\text{Tr}
\Big[ \big(b_{t}c_{t}^{-\frac{1}{2}}\big)^{\ast}b_{t}c_{t}^{-\frac{1}{2}}
c_{t}\Big]\, dQ_{t}\\ &=q_{t}b_{t}c_{t}^{-\frac{1}{2}}\,dM_{t}+q_{t}a_{t}\,dQ_{t},
\end{align*}
and thus, by another application of the It\^o formula, we obtain
\begin{align*}
d (q_{t}Y_t) & =q_{t}Z_t^{\ast} \, dM_t - q_{t}(a_{t}Y_{t}+b_{t}m^{\ast}_{t}Z_{t}+h_{t})\,dQ_{t}+q_{t}\, dU_{t}\\
&\quad\mbox{}+Y_{t}q_{t}b_{t}c_t^{-\frac{1}{2}}\, dM_{t}+Y_{t}q_{t}a_{t}\, dQ_{t}+d\langle Y, q\rangle_t \\
& =q_{t}\big( Z_t^{\ast}+Y_{t}b_{t}c_{t}^{-\frac{1}{2}} \big)\, dM_t-q_{t}(b_{t}m^{\ast}_{t}Z_{t}+h_{t})\,dQ_{t}+q_{t}\, dU_{t}+q_{t}\text{Tr}\left[Z_{t}b_{t} c_{t}^{-\frac{1}{2}} c_{t}\right]dQ_{t}\\
& =q_{t}\big( Z_t^{\ast}+Y_{t}b_{t}c_{t}^{-\frac{1}{2}}\big)\, dM_t-q_{t}h_{t}\, dQ_{t}+q_{t}\, dU_{t}+q_{t}(b_{t}m_t^{\ast}Z_{t}-b_{t}c_{t}^{\frac{1}{2}}Z_{t})\,dQ_{t}\\
& =q_{t}\big( Z_t^{\ast}+Y_{t}b_{t}c_{t}^{-\frac{1}{2}} \big)\, dM_t-q_{t}h_{t}\, dQ_{t}+q_{t}\, dU_{t}
\end{align*}
where the last equality holds since $m^{\ast}_t=(m_{t}m_{t}^{\ast})^{\frac{1}{2}}=c_t^{\frac{1}{2}}$.
Since $(Y,m^{\ast}Z)\in \wHlamo \times \wHlamd $ and inequality \eqref{eqqq} holds,  the process $\wh M$ given by
\be \label{defmmm}
\wh M_t := q_{t}Y_{t}-q_{0}Y_{0}+\int_{0}^{t}q_{u}h_{u}\,dQ_{u}-\int_{0}^{t}q_{u}\,dU_{u}
= \int_{0}^{t}q_{u}\big(Z_{u}^{\ast}+Y_{u}b_{u}c_{u}^{-\frac{1}{2}}\big)\, dM_{u}
\ee
is a local martingale. From Theorem \ref{wellposedness theorem for special BSDE}, we also have that, for $i=1,2$,
\be \label{eqqYU}
\EP \bigg[\sup_{t\in[0,T]}(Y_{t}^{i}-U_{t}^{i})^{2}\bigg]<\infty,
\ee
and thus we may check that $\wh M$ is a uniformly integrable martingale (see the last part of the proof).
Consequently, it is equal to the conditional expectation of its terminal value, which in turn implies that
\bde
q_{t}Y_{t}=\EP \bigg[\, q_{T}\eta+\int_{t}^{T}q_{u}h_{u}\,dQ_{u}-\int_{t}^{T}q_{u}\,dU_{u}\, \Big|\, \mathcal{G}_{t}\bigg]\ge0 .
\ede
Since we assumed that $\eta=\eta^{1}-\eta^{2}\geq 0,\, h_t =h^{1}(t,Y^{2} , Z^{2})-h^{2}(t,Y^{2} , Z^{2}) \geq 0 $ for all $t \in [0,T]$ and the process $U$ is decreasing with $U^1_0-U^2_0=0$, we conclude that the inequality $Y_{t}\ge0$ holds for all $t \in [0,T]$.

To complete the proof, it now remains to demonstrate that the local martingale $\wh M$, which is given by  \eqref{defmmm},
is uniformly integrable. Let us first consider the term $q_{t}Y_{t}$. Since $U$ is a decreasing process, we have that
$|U_{t}|\leq |U_{T}|+|U_{0}|$ and thus, since $U_{T}$ is assumed to belong to $\wh{L}^{2}_{0}$,
\be \label{eqqU}
\EP \bigg(\sup_{t\in[0,T]}|U_{t}|^{2}\bigg)\leq 2\EP \left(|U_{T}|^{2}+|U_{0}|^{2}\right)<\infty.
\ee
Then, by combining (\ref{eqqq}) with (\ref{eqqU}), we obtain
\begin{align*}
\EP \bigg(\sup_{t\in[0,T]}|q_{t}Y_{t}|\bigg)& \leq \bigg[\EP \bigg(\sup_{t\in[0,T]}|q_{t}|^{2}\bigg)\bigg]^{1/2}\bigg[\EP \bigg(\sup_{t\in[0,T]}|Y_{t}|^{2}\bigg)\bigg]^{1/2}\\
& \leq 2 \bigg[\EP \bigg(\sup_{t\in[0,T]}|q_{t}|^{2}\bigg)\bigg]^{1/2}\bigg[\EP \bigg(\sup_{t\in[0,T]}|Y_{t}-U_{t}|^{2}\bigg)+\EP \bigg(\sup_{t\in[0,T]}|U_{t}|^{2}\bigg)\bigg]^{1/2}<\infty
\end{align*}
where we also used \eqref{eqqYU} to establish the last inequality.

Now let us consider the integral $\int_{0}^{t}q_{u}h_{u}\,dQ_{u}$.
Since the generators $h^1$ and $h^2$ satisfy the uniform $m$-Lipschitz condition, there exists some constant $K$,
which may vary from line to line in the following discussion, such that the process
$h_t = h^{1}(t,Y^{2} , Z^{2})-h^{2}(t,Y^{2} , Z^{2})$ satisfies
\[
|h_{t}|\leq K \big( |h^{1}(t,0,0)|+|h^{2}(t,0,0)|+|Y^{2}_{t}|+|m_t^{\ast}Z_{t}^{2}|\big).
\]
Since $(Y^2, m^{\ast}Z^2)\in \wHzero  \times \wHzerd $ and $h^{i}(\cdot,0,0)\in\wHzero $, we see that $h\in\wHzero $.
Therefore, using the boundedness of $Q$, we get
\begin{align*}
\EP \bigg(\sup_{t\in[0,T]}\Big|\int_{0}^{t}q_{u}h_{u}\,dQ_{u}\Big| \bigg)&\leq \EP \bigg(\sup_{t\in[0,T]}\bigg(\int_{0}^{t}|q_{u}h_{u}|^{2}\,dQ_{u}\bigg)^{1/2}Q_{T}^{1/2}\bigg)
\leq  K\EP\bigg(\int_{0}^{T}|q_{u}h_{u}|^{2}\,dQ_{u}\bigg)^{1/2}\\
& \leq K\EP \bigg(\sup_{t\in[0,T]}|q_{t}|\bigg(\int_{0}^{T}|h_{u}|^{2}\,dQ_{u}\bigg)^{1/2}\bigg)\\
& \leq K\bigg[\EP \bigg(\sup_{t\in[0,T]}|q_{t}|^{2}\bigg)\EP \bigg(\int_{0}^{T}|h_{u}|^{2}\,dQ_{u}\bigg)\bigg]^{1/2}
<\infty
\end{align*}
%\begin{align*}
%\EP \bigg(\sup_{t\in[0,T]}\Big|\int_{0}^{t}q_{u}h_{u}\,dQ_{u}\Big| \bigg)&\leq \EP \bigg(\sup_{t\in[0,T]}\bigg(\int_{0}^{t}|q_{u}h_{u}|^{2}\,dQ_{u}\bigg)^{1/2}Q_{T}^{1/2}\bigg)
%\leq  K\EP\bigg(\int_{0}^{T}|q_{u}h_{u}|^{2}\,dQ_{u}\bigg)^{1/2}\\
%& \leq K\EP \bigg(\sup_{t\in[0,T]}|q_{t}|\bigg(\int_{0}^{T}|h_{u}|^{2}\,dQ_{u}\bigg)^{1/2}\bigg)\\
%& \leq K\bigg[\EP \bigg(\sup_{t\in[0,T]}|q_{t}|^{2}\bigg)\bigg]^{1/2}\bigg[\EP \bigg(\int_{0}^{T}|h_{u}|^{2}\,dQ_{u}\bigg)\bigg]^{1/2}
%<\infty.
%\end{align*}
where the last inequality holds in view of \eqref{eqqq} and the previously established property that $h\in\wHzero $.

Finally, we focus on the term $\int_{0}^{t}q_{u}\,dU_{u}$. From  (\ref{eqqq}) and (\ref{eqqU}), we have
\begin{align*}
\EP \bigg(\sup_{t\in[0,T]}\Big|\int_{0}^{t}q_{u}\,dU_{u}\Big|\bigg)&\leq \EP \bigg(|U_T -U_{0}| \sup_{t\in[0,T]}|q_{t}|\bigg) \leq \bigg[\EP |U_T-U_{0}|^{2} \, \EP \bigg(\sup_{t\in[0,T]}|q_{t}|^{2}\bigg)\bigg]^{1/2}
<\infty.
\end{align*}
%\begin{align*}
%\EP \bigg(\sup_{t\in[0,T]}\Big|\int_{0}^{t}q_{u}\,dU_{u}\Big|\bigg)&\leq \EP \bigg(\sup_{t\in[0,T]}|q_{t}||U_T -U_{0}|\bigg) \leq \bigg[\EP \bigg(\sup_{t\in[0,T]}|q_{t}|^{2}\bigg)\bigg]^{1/2}\bigg[\EP \big(|U_T-U_{0}|^{2}\big)\bigg]^{1/2}
%<\infty.
%\end{align*}
Consequently, from the definition of $\wh{M}$, we obtain
\[
\EP \bigg(\sup_{t\in[0,T]}|\wh{M}_{t}|\bigg)<\infty,
\]
which implies that $\wh M$ is a uniformly integrable martingale.
\end{proof}

\brem \label{remark for comparison theorem 2}
If $\eta^{1}\ge\eta^{2}$ and $h^{1}(\cdot ,Y^{1}, Z^{1})\ge h^{2}(\cdot ,Y^{1}, Z^{1}),\, \P\otimes \Leb$-a.e.,
then one can also prove that $Y^{1}_{t}\ge Y^{2}_{t}$  for every $t\in[0,T]$.
\erem

\brem \label{financial remark for comparison theorem}
In the above proof, we needed to ensure that the stochastic integral
\be \label{inhh}
\int_{0}^{t}b_{u}(m_{u}m^{\ast}_{u})^{-\frac{1}{2}}\, dM_{u},
\ee
and thus also the process $q$, are well defined.  From the monograph by Karatzas and Shreve \cite{KS-1998} (see Chapter 3, Definition 2.9), we know that the stochastic integral \eqref{inhh} is well defined when the processes $b$ and $m$ are $\gg$-progressively measurable. For this reason, we require that the processes $m,\, U^{i}$, as well as the process $h^{i}(\cdot,y,z)$,
for any fixed $(y,z)\in\rr \times \rr^{d}$, are $\gg$-progressively measurable.
Furthermore,  from \cite{KS-1998} (Chapter 3, Remark 2.11), if  $Q_{t}=t$ in (\ref{mm33}), then the stochastic integral \eqref{inhh} is well defined, provided that the processes $b$ and $m$ are $\gg$-adapted (not necessarily $\gg$-progressively measurable). Therefore, when $Q_{t}=t$, the adaptedness is sufficient. Let us mention that in our financial applications, we usually  have $Q_{t}=t$ (see \cite{BR-2014,NR-2014,NR-2014a}). In Section \ref{sect5}, we provide an example of a financial market model, in which we may take $Q_{t}=t$.
\erem

%%%%%%%%%%%%%%%%%%%%%%%%%%%%%%%%%%%%%%%%%%%%%%%%%%%%%%%%%%%%%%%%%%%%%%%%%%%%%%%%%%%%%%%%%%%%%%%
%%%%%%%%%%%%%%%%%%%%%%%%%%%%%%%%%%%%%%%%%%%%%%%%%%%%%%%%%%%%%%%%%%%%%%%%%%%%%%%%%%%%%%%%%%%%%%%
\section{BSDEs with a Lipschitzian Generator}   \label{sect4}
%%%%%%%%%%%%%%%%%%%%%%%%%%%%%%%%%%%%%%%%%%%%%%%%%%%%%%%%%%%%%%%%%%%%%%%%%%%%%%%%%%%%%%%%%%%%%%%
%%%%%%%%%%%%%%%%%%%%%%%%%%%%%%%%%%%%%%%%%%%%%%%%%%%%%%%%%%%%%%%%%%%%%%%%%%%%%%%%%%%%%%%%%%%%%%%

It is fair to acknowledge that the concept of a (uniformly) $m$-Lipschitzian generator,
although very convenient for the mathematical analysis of BSDE \eqref{special BSDE}, can be seen as
somewhat  artificial from a more practical point of view. Indeed, typically a particular class
of BSDEs arises in a natural way when solving problems within a given framework, so the shape of
the BSDE and its generator is imposed by the problem at hand, rather than arbitrarily postulated.
The goal of this section is to provide a link between BSDEs \eqref{special BSDE} with uniformly $m$-Lipschitzian
generators and some classes of BSDEs arising in various applications to stochastic optimal control and
financial mathematics. We will also make some pertinent comments on solvability of BSDEs given by (\ref{multid BSDE}).

%%%%%%%%%%%%%%%%%%%%%%%%%%%%%%%%%%%%%%%%%%%%%%%%%%%%%%%%%%%%%%%%%%%%%%%%%%%%%%%%%%%%%%%%%%%%%%%
\subsection{BSDEs with a Uniformly Lipschitzian Generator}   \label{sect4.1}
%%%%%%%%%%%%%%%%%%%%%%%%%%%%%%%%%%%%%%%%%%%%%%%%%%%%%%%%%%%%%%%%%%%%%%%%%%%%%%%%%%%%%%%%%%%%%%%

In the case of BSDEs driven by a Brownian motion, it is common to suppose that the generator is uniformly Lipschitzian,
as we also postulated in the case of BSDEs driven by a one-dimensional martingale (see condition \eqref{uniformly Lipschitz for the driver}). By contrast, most of existing studies of BSDEs driven by a multi-dimensional martingale hinge on the postulate that the generator satisfies some form of the $m$-Lipschitz condition. The latter choice seems to be motivated mainly by mathematical convenience.

Since our comparison theorem requires the generator to be uniformly $m$-Lipschitzian, the following natural question thus arises. Suppose that a generator $h$ satisfies the uniform Lipschitz condition \eqref{uniformly Lipschitz for the driver}. Does this mean that $h$ satisfies the uniform $m$-Lipschitz condition \eqref{Lipschitz for the driverx} as well? To answer this question, we need to take a closer look on the term $m$ appearing in factorization (\ref{mm33}). In the case of a general process $m$, the following assumption may be introduced.

\bhyp \label{assumption 2 for m}
There exists a constant $K_m>0$ such that, for all $t\in[0,T]$,
\be \label{mmu}
\norm m_{t}\norm+\norm(m_{t}m_{t}^{\ast})^{-\frac{1}{2}}\norm\leq K_m .
\ee
\ehyp

As shown in the next lemma, condition \eqref{mmu} is a convenient way of ensuring that the uniform $m$-Lipschitz condition \eqref{Lipschitz for the driverx} for a generator $h$ holds. It is fair to acknowledge, however, that
condition \eqref{mmu} has a shortcoming that it is not satisfied in a typical market model and thus its usefulness is somewhat limited in the context of problems arising in financial mathematics.

\bl \label{lemma for different generator}
Under Assumption \ref{assumption 2 for m}, the generator $h$ satisfies the uniform Lipschitz condition \eqref{uniformly Lipschitz for the driver} if and only if it satisfies the uniform $m$-Lipschitz condition \eqref{Lipschitz for the driverx}.
\el

\begin{proof}
It is clear that when \eqref{Lipschitz for the driverx} is combined with \eqref{mmu} then condition \eqref{uniformly Lipschitz for the driver} holds for some constant $L$. To show that the converse implication is valid as well, we assume that (\ref{uniformly Lipschitz for the driver}) holds with a constant $L$. Under Assumption \ref{assumption 2 for m}, we have that $\norm (m_{t}m_{t}^{\ast})^{-\frac{1}{2}}\norm\leq K_m$, which implies that the eigenvalues of $m$ are all greater than or equal to $\lambda_m :=1/K_m$. Consequently, we obtain
\bde
\|m_{t}^{\ast}z\|^{2}=z^{\ast}m_{t}m_{t}^{\ast}z\ge \Lambda_m \|z\|^{2}
\ede
where $\Lambda_m = \lambda^2_m$. Therefore, upon setting $\widehat{L}= L \max (1,K_m)$, we conclude that (\ref{Lipschitz for the driverx}) is valid.
\end{proof}

It is clear that Assumption \ref{assumption 2 for m} implies that there exists a constant $k_m$ such that
$\norm m_{t} \norm \ge k_m >0$ for all $t\in[0,T]$. Then the random variable
$\langle M\rangle_{T}$ given by \eqref{mm33} is bounded, since the process $Q$ was assumed to be bounded.

Moreover, from above lemma, we know that under Assumption \ref{assumption 2 for m}, all the results in Section \ref{sect3} hold for BSDEs (\ref{special BSDE}) with a uniformly Lipschitzian generator.

We argue that Assumption \ref{assumption 2 for m} would be also convenient when dealing with BSDE (\ref{multid BSDE}), which has the following form
\be \label{multid BSDExx}
dY_t  = Z_t^{\ast} \, dM_t -  \Iast d\langle M\rangle_{t}\, \hdd(t, Y_t , Z_t ) + dU_t ,\quad Y_{T}= \eta ,
\ee
with the generator $\hdd: \Omega \times[0,T] \times \rr \times \rr^d \rightarrow \rr^d$ satisfying the
measurability Assumption~\ref{assumption standard generator}.

\newpage

To the best of our knowledge, due to its complexity, the BSDE of this shape was not yet studied in detail in the existing literature. Under Assumptions \ref{edded} and \ref{assumption 2 for m}, BSDE (\ref{multid BSDExx}) may be represented as follows
\be \label{multid BSDE 2}
dY_t  = Z_t^{\ast} \, dM_t -  \Iast m_{t}m^{\ast}_{t}\, \hdd(t, Y_t , Z_t )\, dQ_{t}\, + dU_t ,\quad Y_{T}= \eta ,
\ee
where we make the assumption that the $\rr^d$-valued generator $\hdd$ is uniformly Lipschitzian, specifically,
there exists a constant $L_d>0$ such that, for all $t\in[0,T]$ and all $y_{1},y_{2}\in\mathbb{R},\, z_{1},z_{2}\in\mathbb{R}^d$,
\[
\|\hdd(t,y_{1},z_{1})-\hdd(t,y_{2},z_{2})\|\leq L_d \big( |y_{1}-y_{2}|+ \|z_{1}-z_{2}\| \big).
\]
The following proposition shows that the classes of BSDEs (\ref{special BSDE}) and
(\ref{multid BSDE 2}) with uniformly Lipschitzian generators are essentially equivalent.

\bp
Under Assumptions \ref{edded} and \ref{assumption 2 for m}, the problem of solving BSDE (\ref{multid BSDE 2}) with a uniformly Lipschitzian $\rr^d$-valued generator $\hdd$ is essentially equivalent to solving BSDE (\ref{special BSDE}) with a uniformly Lipschitzian real-valued generator $h$.
\ep

\begin{proof}
Assume first that the $\rr^d$-valued generator $\hdd$ in BSDE \eqref{multid BSDE 2} is uniformly Lipschitzian.
Then we define the associated real-valued generator $h$ by setting $h(t,y,z):=\Iast m_{t}m^{\ast}_{t}\, \hdd(t, y , z)$. In view of the inequality $\norm m \norm \leq K_m$ (see Assumption \ref{assumption 2 for m}), it is clear that the generator $h$ is also uniformly Lipschitzian. This means that BSDE (\ref{multid BSDE 2}) can be reduced to BSDE (\ref{special BSDE}) with a uniformly Lipschitzian generator $h$. Hence if the answer to the well-posedness problem for BSDE (\ref{special BSDE}) is positive, then the same property is enjoyed by BSDE (\ref{multid BSDE 2}).

Conversely, we observe that to any real-valued generator $h$ we may associated the $\rr^d$-valued generator $\hdd$.
To this end, we may simply take $\hdd:=(mm^{\ast})^{-1}(h,0,\ldots,0)^{\ast}$. Since now the real-valued generator $h$ is assumed to be uniformly Lipschitzian, in view of the inequality $\norm (mm^{\ast})^{-\frac{1}{2}}\norm \leq K_m$, we conclude that the associated $\rr^d$-valued generator $\hdd$ is also uniformly Lipschitzian. Therefore, if a result yielding the existence and uniqueness of a solution to BSDE (\ref{multid BSDE 2}) with a  uniformly Lipschitzian generator $\hdd$ is available, then this result covers BSDE (\ref{special BSDE}) as well.
\end{proof}

Let us now consider the comparison theorem for BSDEs (\ref{multid BSDE 2}). Suppose that the generator $\hdd$ is uniformly Lipschitzian and, in addition, $m, U$ are $\gg$-progressively measurable and $\hdd(\cdot,\cdot,y,z)$ is $\gg$-progressively measurable for every fixed $(y,z)\in\mathbb{R}\times\mathbb{R}^{d}$. Then one can establish a version of the comparison theorem for BSDEs (\ref{multid BSDE 2}) by either using Theorem \ref{comparison theorem 2x} or directly from the results of
Tevzadze \cite{T-2008} (in the latter case, by employing also the boundedness of $\langle M\rangle_{T}$).  In such case,
the assumption that $h^{1}(\cdot,Y^{2}, Z^{2})\ge h^{2}(\cdot ,Y^{2}, Z^{2}),\, \P\otimes \Leb$-a.e., should
be replaced by
\bde
\Iast mm^{\ast} \hdd^{1}(\cdot,Y^{2}, Z^{2})\ge \Iast m m^{\ast} \hdd^{2}(\cdot ,Y^{2}, Z^{2}),
\quad \P\otimes \Leb-\text{a.e.},
\ede
which seems to be cumbersome to verify. For the reasons explained above, we leave this task for a future study,
and we henceforth focus on alternative assumptions on a generator that, as will be shown in Section \ref{sect5},
are satisfied by BSDEs arising in market models with funding costs.

%%%%%%%%%%%%%%%%%%%%%%%%%%%%%%%%%%%%%%%%%%%%%%%%%%%%%%%%%%%%%%%%%%%%%%%%%%%%%%%%%%%%%%%%%%%%%%%
\subsection{BSDEs with a Uniformly $\mathbb{\XX}$-Lipschitzian Generator}   \label{sect4.2}
%%%%%%%%%%%%%%%%%%%%%%%%%%%%%%%%%%%%%%%%%%%%%%%%%%%%%%%%%%%%%%%%%%%%%%%%%%%%%%%%%%%%%%%%%%%%%%%

We stress that Assumption \ref{assumption 2 for m} covers the case when the process
$m$ in (\ref{mm33}) is not explicitly known. In a typical applications, we have more information about the
shape of the generator $h$ and perhaps also the process $m$.
The motivation for the setup studied in this subsection comes from various applications of BSDEs in financial mathematics (see, e.g., the seminal paper by El Karoui et al. \cite{EPQ-1997}). To specify our setup, we start by defining the matrix-valued process
$\XX$, which is given by
\be \label{eqss}
\XX_{t}:=
\begin{pmatrix}
X^{1}_{t} & 0 & \ldots & 0 \\
0 & X^{2}_{t} & \ldots & 0 \\
\vdots & \vdots & \ddots & \vdots\\
0 & 0 & \ldots & X^{d}_{t}
\end{pmatrix}
\ee
where the auxiliary processes $X^i,\, i=1,2,\dots ,d$ are assumed to be $\gg$-adapted.

\newpage

The auxiliary processes $X^i,\, i=1,2,\dots ,d$ arise naturally in some applications, so there their choice is not arbitrary,
but depends on a particular application at hand. In some instances, it may happen that $X^i = M^i$ for all $i$ but, typically, the processes $X^i$ and $M^i$ will be different, albeit they are usually closely related. For instance, the martingales $M^i,\, i=1,2,\dots ,d$ and the auxiliary processes $X^i,\, i=1,2,\dots ,d$ may be obtained from a predetermined family of some underlying processes either through integration or by solving stochastic differential equations driven by processes from this family.
For an explicit illustration of the last statement, we refer to Section~\ref{sect5}.

In this subsection, we postulate that the generator $h$ can be represented as $h(t,y,z)= g (t,y,\XX_{t}z )$ for some
function $g:\Omega \times[0,T] \times \rr \times \rr^{d} \rightarrow \rr$
satisfying Assumption \ref{assumption standard generator}. Then BSDE \eqref{special BSDE} can be represented as follows
\be \label{special BSDEmm}
\left\{
\begin{array}
[c]{ll}
dY_t  = Z_t^{\ast} \, dM_t - g(t,Y_t,\XX_{t}Z_t )\, dQ_{t} + dU_t,\medskip\\
Y_{T}=\eta .
\end{array}
\right.
\ee

Suppose that equation \eqref{mm33} holds for some $\rr^{d \times d}$-valued, $\gg$-adapted process $m$
and the generator $h(t,y,z)= g (t,y,\XX_{t}z)$ where the function $g$ satisfies the uniform
Lipschitz condition. We are now going to address following natural question: under which assumptions about $M,m$ and $\mathbb{\XX}$, the generator $h$ satisfies the (uniform) $m$-Lipschitz condition?

We first observe that to ensure that the generator $h$ satisfies the $m$-Lipschitz condition, it suffices to postulate that a strictly positive lower bound for the norm $\norm m \norm$ exists. However, to ensure that $h$ satisfies the uniform $m$-Lipschitz condition, we still need to postulate, in addition, that the processes $X^{i}$ are bounded as well.

To sum up, if we postulate that the process $\norm m \norm$ is bounded away from zero and the processes $X^{i},\, i=1,2,\ldots,d$ are bounded, then the generator $h$ is both uniformly Lipschitzian and uniformly $m$-Lipschitzian.  Obviously, the boundedness of the driving martingale is a very restrictive condition, since it is unlikely to be satisfied in most applications.
Fortunately, in a typical application, one has more information about the driving martingales, which can be used to describe
 a suitable class of generators. This observation allows us to introduce Assumption \ref{assumption 3 for m} and to argue that the comparison theorem can still be applied, despite the fact that Assumption \ref{assumption 2 for m} fails to hold.
In Assumption \ref{assumption 3 for m},  we will employ the following standard definition of ellipticity.

\bd \label{non-degenerate}
We say that an $\rr^{d \times d}$-valued process $\gamma$ satisfies the {\it ellipticity} condition if there exists a constant $\Lambda>0$ such that
\be \label{elli}
\sum_{i,j=1}^{d}\left(\gamma_{t}\gamma^{\ast}_{t}\right)_{ij}a_{i}a_{j}\ge \Lambda \|a\|^{2} %=\Lambda a^{\ast}a,
\ \text{ for all } a\in \rr^{d} \text{ and } t\in[0,T].
\ee
\ed

For the justification of the next assumption in the context of financial models driven by a multi-dimensional Brownian motion,
see Section \ref{sect5}.

\bhyp  \label{assumption 3 for m}
The $\rr^{d \times d}$-valued, $\gg$-adapted process $m$ in equation \eqref{mm33} is given by
\[
m_{t}m_{t}^{\ast}= \XX_{t}\gamma_{t}\gamma_{t}^{\ast}\mathbb{\XX}_{t}
\]
where $\gamma = [\gamma^{ij}]$ is a $d$-dimensional square matrix of $\gg$-adapted processes satisfying
the ellipticity condition \eqref{elli}.
\ehyp

The following definition is natural when dealing with a generator $h(t,y,z)= g ( t,y, \mathbb{\XX}_{t} z )$.

\bd
 We say that a generator $h$ satisfies the {\it uniform $\XX$-Lipschitz condition} if there exists a constant $\wt{L}$ such that, for every $y_1, y_2 \in \rr$ and $z_1 , z_2 \in \rr^d$,
\be \label{special generator}
|h(t,y_1,z_1)-h(t,y_2,z_2)|\leq \wt{L} \big( |y_1 - y_2|+ \|\XX_{t} (z_1-z_2) \|\big).
\ee
\ed

Let us note that condition \eqref{special generator} is equivalent to the following condition: there exists a constant
$\wt{L}_0$ such that, for every $y_1, y_2 \in \rr$ and $z_1 , z_2 \in \rr^d$,
\be \label{special generatorc}
|h(t,y_1,z_1)-h(t,y_2,z_2)|\leq \wt{L}_0 \Big( |y_1 - y_2|+\sum_{i=1}^{d}|X^{i}_{t}(z_1^{i}-z_2^{i}) |\Big)
\ee
where $z_k = (z^1_k, z^2_k,\dots , z^d_k)^{\ast}$ for $k=1,2$. It is worth emphasizing that this condition is frequently satisfied by generators of BSDEs are obtained by analyzing the dynamics of trading strategies (see, for instance, the generator $\wt{f}_l$ given by \eqref{drift function lending}).

The next lemma shows that a combination of conditions  \eqref{elli} and \eqref{special generator}
ensures that a generator is uniformly $m$-Lipschitzian.

\bl \label{lemma special m generator property}
If Assumption \ref{assumption 3 for m} holds and the generator $h$ is uniformly $\XX$-Lipschitzian,
then $h$ is uniformly $m$-Lipschitzian with $\wh{L} = \wt{L} \max \big( 1, \Lambda^{-1/2} \big)$ where
$\Lambda$ is the constant of Definition \ref{non-degenerate}.
\el

\begin{proof}
Assumption \ref{assumption 3 for m} yields, for every $z \in \rr^d$,
\be \label{non-degenerate inequality}
 \| m_{t}^{\ast} z \|^2 = z^{\ast}m_{t}m_{t}^{\ast}z=z^{\ast}\XX_{t}\gamma_{t}\gamma_{t}^{\ast}\XX_{t}z
\geq \Lambda \|\XX_{t} z \|^2 .
\ee
By combining \eqref{special generator} and  (\ref{non-degenerate inequality}), we obtain
\[
|h(t,y_1,z_1)-h(t,y_2,z_2)| \leq \widetilde{L} \big(|y_1- y_2|+ \|\XX_{t} (z_1-z_2) \|\big)
\leq \wt{L} |y-\wt{y}|+ \wt{L} K \| m_{t}^{\ast}(z_1 - z_2) \|,
\]
and thus the generator $h$ satisfies the uniform $m$-Lipschitz condition with $\wh{L}= \wt{L} \max \big( 1, \Lambda^{-1/2} \big)$.
\end{proof}

%\bd \lab{defstanxx}
%We say that $(h,\eta ,U)$ is an $(m , \mathbb{\XX}, \wt{L})$-{\it standard parameter} if: \hfill \break
%(i) $h:\Omega \times[0,T] \times \rr \times \rr^{d}\rightarrow \rr$ satisfies the
%uniform $m$-Lipschitz condition with a constant $\wh{L}$, \hfill \break
%(ii) the process $h(\cdot,0,0)$ belongs to $\wh{\mathcal{H}}_{0}^{2}(\rr)$, \hfill \break
%(iii) a random variable $\eta $ belongs to $\wh{L}^{2}_{0}$, \hfill \break
%(iv) $U$ is a real-valued, $\gg$-adapted process
%such that $U\in\wh{\mathcal{H}}_{0}^{2}(\rr)$ and $U_{T}\in \wh{L}^{2}_{0}$.
%\ed

We are in a position to establish the existence and uniqueness result for BSDE \eqref{special BSDE}
under either of Assumptions \ref{assumption 2 for m} and \ref{assumption 3 for m}. Note that if
Assumption \ref{assumption 2 for m} is postulated, then we assume, in addition, that the processes
$X^i,\, i=1,2,\dots ,d$ are bounded.

\begin{theorem} \label{appendix wellposedness theorem for lending special BSDE}
Assume that the generator $h$ can be represented as $h(t,y,z)= g (t,y,\XX_{t}z)$
where the function $g:\Omega \times [0,T] \times \rr \times \rr^{d} \rightarrow \rr$
satisfies the uniform Lipschitz condition, so that there exists a constant $\bar L$ such that,
for every $y_1, y_2 \in \rr$ and $z_1 , z_2 \in \rr^d$,
\be \label{lipmm}
|g(t,y_{1},z_{1})-g(t,y_{2},z_{2})|\leq \bar{L} \big( |y_{1}-y_{2}|+\|z_{1}-z_{2}\| \big).
\ee
Let the process $h(\cdot,0,0)$ belong to $\wHzero $, the random variable $\eta $ belong to $\wh{L}^{2}_{0}$, and $U$ be a real-valued, $\gg$-adapted process such that $U \in \wHzero$ and $U_{T}\in \wh{L}^{2}_{0}$. Assume that one of the following holds: \hfill \break
(i) the process $m$ satisfies Assumption \ref{assumption 2 for m} and the process $\XX$ is bounded, \hfill \break
(ii) the process $m$ satisfies Assumption \ref{assumption 3 for m} some constant $\Lambda >0 $. \hfill \break
Then BSDE \eqref{special BSDE} has a unique solution $(Y,Z)$ such that $(Y, m^{\ast}Z)\in\wHzero  \times \wHzerd$.
Moreover, the processes $Y$ and $U$ satisfy $$\EP \bigg[\sup_{t\in[0,T]}|Y_{t}-U_{t}|^{2}\bigg]<\infty .$$
\end{theorem}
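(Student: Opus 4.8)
The plan is to reduce both cases to the well-posedness result already established for generators satisfying the uniform $m$-Lipschitz condition, namely Theorem~\ref{wellposedness theorem for special BSDE}. The key observation is that this theorem applies to any $(m,\wh{L})$-standard parameter, so it suffices to verify, in each of the two cases, that $(h,\eta,U)$ is in fact an $(m,\wh{L})$-standard parameter for a suitable constant $\wh{L}$. Since the hypotheses on $\eta$ and $U$ (namely $\eta\in\wh{L}^2_0$ and $U\in\wHzero$ with $U_T\in\wh{L}^2_0$) already match conditions (iii)--(iv) of Definition~\ref{defstanx} verbatim, and since $h(\cdot,0,0)\in\wHzero$ is assumed, the only genuine work is to check condition (i): that $h$ is uniformly $m$-Lipschitzian.

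First I would dispose of case (ii), which is the cleaner one. Here $h(t,y,z)=g(t,y,\XX_t z)$ with $g$ uniformly Lipschitzian with constant $\bar{L}$ by \eqref{lipmm}. Applying \eqref{lipmm} with the arguments $\XX_t z_1$ and $\XX_t z_2$ in place of $z_1,z_2$ gives immediately that $h$ is uniformly $\XX$-Lipschitzian in the sense of \eqref{special generator}, with $\wt{L}=\bar{L}$. Then Lemma~\ref{lemma special m generator property}, invoked under Assumption~\ref{assumption 3 for m}, shows that $h$ is uniformly $m$-Lipschitzian with $\wh{L}=\bar{L}\max\big(1,\Lambda^{-1/2}\big)$. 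Thus $(h,\eta,U)$ is an $(m,\wh{L})$-standard parameter, and Theorem~\ref{wellposedness theorem for special BSDE} delivers the unique solution $(Y,Z)$ with $(Y,m^*Z)\in\wHzero\times\wHzerd$ together with the stated supremum estimate for $Y-U$.

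For case (i), the strategy is the same but the uniform $m$-Lipschitz bound is obtained through a different route. Here Assumption~\ref{assumption 2 for m} holds and $\XX$ is bounded, say $\norm\XX_t\norm\le C_{\XX}$. Starting again from $h(t,y,z)=g(t,y,\XX_t z)$ and \eqref{lipmm}, the boundedness of $\XX$ gives $\|\XX_t(z_1-z_2)\|\le C_{\XX}\|z_1-z_2\|$, so $h$ is uniformly Lipschitzian in the sense of \eqref{uniformly Lipschitz for the driver} with constant $L=\bar{L}\max(1,C_{\XX})$. Now Lemma~\ref{lemma for different generator}, which is exactly the equivalence of the uniform Lipschitz and uniform $m$-Lipschitz conditions under Assumption~\ref{assumption 2 for m}, converts this into the uniform $m$-Lipschitz condition \eqref{Lipschitz for the driverx} with $\wh{L}=L\max(1,K_m)$. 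Once more $(h,\eta,U)$ is an $(m,\wh{L})$-standard parameter and Theorem~\ref{wellposedness theorem for special BSDE} concludes the argument.

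The main obstacle, such as it is, is purely bookkeeping: one must make sure that in each case the uniform $m$-Lipschitz constant $\wh{L}$ is genuinely finite and deterministic, which is where the two structural hypotheses do their work. In case (i) finiteness of $\wh{L}$ rests on the \emph{uniform} bound $K_m$ in \eqref{mmu} together with the boundedness of $\XX$; in case (ii) it rests on the \emph{uniform} ellipticity constant $\Lambda$ supplied by Assumption~\ref{assumption 3 for m}. No new estimate beyond Lemmas~\ref{lemma for different generator} and~\ref{lemma special m generator property} and Theorem~\ref{wellposedness theorem for special BSDE} is required, so the proof is essentially an assembly of results proved earlier in the section.
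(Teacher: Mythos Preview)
Your proposal is correct and follows essentially the same approach as the paper: in both cases you verify that $h$ is uniformly $m$-Lipschitzian (via Lemma~\ref{lemma for different generator} for case (i) and Lemma~\ref{lemma special m generator property} for case (ii)) and then invoke Theorem~\ref{wellposedness theorem for special BSDE}. If anything, your argument for case~(i) is slightly more explicit than the paper's, which simply asserts that $h$ is uniformly $m$-Lipschitzian with a constant depending on the bounds for $X^i$ and the lower bound for $\norm m\norm$, without naming Lemma~\ref{lemma for different generator}.
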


\begin{proof}
(i) We first postulate that Assumption \ref{assumption 2 for m} holds and the process $X^{i}$ is bounded for every $i=1,2,\ldots,d$. One can deduce that $h$ satisfies the uniform $m$-Lipschitz condition with a constant $\wh{L}$, which depends on the bounds for $X^{i}$ for $i=1,2,\ldots,d$, as well as on the lower bound for $\norm m \norm $.
Therefore, the statement is an immediate consequence of Theorem \ref{wellposedness theorem for special BSDE}.

\noindent (ii) Since $h(t,y,z)= g (t,y,\XX_{t}z)$ where the function $g$
satisfies the uniform Lipschitz condition, it is clear that $h$ satisfies the uniform $\XX$-Lipschitz condition
with $\wt{L}= \bar{L}$. From Assumption \ref{assumption 3 for m} and Lemma \ref{lemma special m generator property},
we deduce that $h$ satisfies the uniform $m$-Lipschitz condition $\wh{L} = \bar{L} \max \big( 1, \Lambda^{-1/2} \big)$. Hence, once again, the assertion follows by an application of Theorem \ref{wellposedness theorem for special BSDE}.
\end{proof}

\newpage

%%%%%%%%%%%%%%%%%%%%%%%%%%%%%%%%%%%%%%%%%%%%%%%%%%%%%%%%%%%%%%%%%%%%%%%%%%%%%%%%%%%%%%%%%%%%%%%
%%%%%%%%%%%%%%%%%%%%%%%%%%%%%%%%%%%%%%%%%%%%%%%%%%%%%%%%%%%%%%%%%%%%%%%%%%%%%%%%%%%%%%%%%%%%%%%
\section{BSDEs in Market Models with Funding Costs} \label{sect5}
%%%%%%%%%%%%%%%%%%%%%%%%%%%%%%%%%%%%%%%%%%%%%%%%%%%%%%%%%%%%%%%%%%%%%%%%%%%%%%%%%%%%%%%%%%%%%%%
%%%%%%%%%%%%%%%%%%%%%%%%%%%%%%%%%%%%%%%%%%%%%%%%%%%%%%%%%%%%%%%%%%%%%%%%%%%%%%%%%%%%%%%%%%%%%%%

We will now demonstrate that the comparison theorem established in Section \ref{sect3} can be applied
to obtain lower and upper bounds on unilateral prices in a market model under funding costs.
For the detailed analysis of issues related to the postulated trading mechanism, the no-arbitrage property of the market,
and the pricing and hedging of a collateralized contract, the reader is referred to \cite{BR-2014,NR-2014,NR-2014a}. In this section, we will focus on the r\^ole of BSDEs in producing inequalities yielding the range of fair bilateral prices.

%%%%%%%%%%%%%%%%%%%%%%%%%%%%%%%%%%%%%%%%%%%%%%%%%%%%%%%%%%%%%%%%%%%%%%%%%%%%%%%%%%%%%%%%%%%%%%%
\subsection{Risky Assets and Funding Accounts} \label{sect5.1}
%%%%%%%%%%%%%%%%%%%%%%%%%%%%%%%%%%%%%%%%%%%%%%%%%%%%%%%%%%%%%%%%%%%%%%%%%%%%%%%%%%%%%%%%%%%%%%%

Let us first recall the following setting of \cite{BR-2014} for the market model.
Throughout the paper, we fix a finite trading horizon date $T>0$ for our model of the financial market.
Let $(\Omega, \cG, \gg , \P)$ be a filtered probability space satisfying the usual conditions of right-continuity and completeness, where the filtration $\gg = (\cG_t)_{t \in [0,T]}$ models the flow of information available to all traders. For convenience, we assume that the initial $\sigma$-field $\cG_0$ is trivial. All processes introduced in what follows are implicitly assumed to be $\gg$-adapted  and any semimartingale is assumed to be c\`adl\`ag.

For $i=1,2, \dots, d$, we denote by $S^i$ the {\it ex-dividend price} of the $i$th risky asset with the {\it cumulative dividend stream} $A^i$. The risk-free {\it lending} (resp., {\it borrowing}) {\it cash account} $B^l$ (resp., $B^b$) is used for unsecured lending (resp., borrowing) of cash. We denote by $B^{i,b}$ {\it funding account} associated with the $i$th risky asset.
The corresponding short-term interest rates $r^{l},r^{b},r^{i,b}$ are non-negative and bounded processes, the bounded processes $B^l$ and $B^r$ satisfy $dB^l_{t}=r^{l}_{t}B^l_{t}\, dt$ and $dB^b_{t}=r^{b}_{t}B^b_{t}\, dt$
with $B^l_{0}= B^r_0= 1$, so that the inequalities $B^l_t \geq 1$ and $B^b_t \geq 1$ hold for all $t \in [0,T]$.
Finally, we also introduce the funding accounts $B^{C,l}$ and $B^{C,b}$ for the margin account represented by a process
 $C$. It is assumed throughout that $0\leq r^l_t\leq r^b_t$ and $r^l_t\leq r^{i,b}_t$ for all $t \in [0,T]$.

For convenience, we introduce the matrix-valued process $\mathbb{S}$ given by
\[
\mathbb{S}_{t}:=
\begin{pmatrix}
S^{1}_{t} & 0 & \ldots & 0 \\
0 & S^{2}_{t} & \ldots & 0 \\
\vdots & \vdots & \ddots & \vdots\\
0 & 0 & \ldots & S^{d}_{t}
\end{pmatrix}.
\]
The discounted cumulative prices of risky assets are given by the following expressions
\bde % \label{processes}
\wt S^{i,l,{\textrm{cld}}}_t =  (B^l_t)^{-1}S^i_t + \int_{(0,t]} (B^l_u)^{-1} \, dA^i_u
\ede
%and
%\bde
%\wt
%S^{i,b,{\textrm{cld}}}_t =  (\Bbr_t)^{-1}S^i_t + \int_{(0,t]} (\Bbr_u)^{-1} \, d\pA^i_u
%\ede
where $A^i$ is the {\it dividend process} of the $i$th risky asset, so that
\be \label{cumulative dividend risk asset price1}
d\wt S^{i,l,{\textrm{cld}}}_t=(B^l_t)^{-1}\big(dS^i_t - r^l_t S^i_t \, dt + dA^i_t\big).
\ee
%and
%\be  \label{cumulative dividend risk asset price2}
%d\wt S^{i,b,{\textrm{cld}}}_t=(\Bbr_t)^{-1}\left(dS^i_t - \rbb_t S^i_t \, dt + d\pA^i_t\right).
%\ee

The following assumption corresponds to Assumptions \ref{mm33} and \ref{assumption 3 for m} (see also Assumption 4.2 in \cite{NR-2014}). Note that here $\PT^l$ is a probability measure equivalent to $\P$ on $(\Omega , \cG_T)$.

\bhyp \label{changed assumption for lending cumulative dividend price}
We postulate that: \hfill \break
(i) the process $\wt S^{l,\textrm{cld}} = ( \wt S^{1,l,\textrm{cld}}, \dots , \wt S^{d,l,\textrm{cld}})^*$ is a continuous, square-integrable, $(\PT^l , \gg)$-martingale and has the predictable representation property with respect to the filtration $\gg$ under~$\PT^l$, \hfill \break
(ii) there exists a $\gg$-adapted, bounded, increasing process $Q$ such that the equality
\be \label{gmma1}
\langle \wt S^{l,\textrm{cld}}\rangle_{t}=\int_{0}^{t}m^{l}_{u}(m_{u}^{l})^{\ast}\,dQ_u
\ee
 holds, where a $\gg$-adapted process $m^{l}$ is such that the $d$-dimensional square matrix $m^{l}$ is invertible and
admits the representation
\be \label{gmma2}
m^{l}(m^{l})^{\ast} = \mathbb{S} \gamma \gamma^{\ast} \mathbb{S}
\ee
where a $d$-dimensional square matrix $\gamma $ of $\gg$-adapted processes satisfies the ellipticity condition~\eqref{elli}.
\ehyp

It is worth noting that Assumption \ref{changed assumption for lending cumulative dividend price} can be easily met when the prices of risky assets are given by the diffusion-type model. For example, we may assume that each risky asset $S^i,\, i=1, 2, \dots  ,d$ has the ex-dividend price dynamics under $\P$ given by
\[
dS^i_t = S^i_t \bigg( \mu^i_t \, dt + \sum_{j=1}^d \sigma^{ij}_t \, dW^j_t \bigg), \quad S^i_0>0 ,
\]
or, equivalently, the $d$-dimensional process $S=(S^{1},\ldots,S^{d})^{\ast}$ satisfies
\[
dS_{t}=\mathbb{S}_{t}(\mu_{t} \, dt+\sigma_{t} \, dW_{t})
\]
where $W = (W^1, \dots , W^d)^{\ast}$ is the $d$-dimensional Brownian motion, $\mu = (\mu^1, \dots , \mu^d)^{\ast}$ is an $\rr^d$-valued, $\ff^W$-adapted process, $\sigma = [\sigma^{ij}]$ is a $d$-dimensional square matrix of $\ff^W$-adapted processes satisfying the ellipticity condition  (see Definition \ref{non-degenerate}). We now set $\gg = \ff^W$ and we recall that the $d$-dimensional Brownian motion $W$ enjoys the predictable representation property with respect to its natural filtration $\ff^W$. Hence this property is shared by the $\PT^{l}$-Brownian motion $\wt{W}$, which is defined by equation \eqref{wtw2}.

Assuming that the corresponding dividend processes are given by $A^i_t = \int_0^t \kappa^i_{u} S^i_{u} \, du $,
we obtain
\bde
d\wt S_t^{i,l,\textrm{cld}}=(B_{t}^{l})^{-1} \big( dS^i_t+ dA^i_t-r_{t}^{l}S_{t}^{i}\, dt \big)
=(B_{t}^{l})^{-1}S^{i}_{t}\bigg( \big( \mu^i_t+\kappa^i_t-r_{t}^{l} \big)\,dt + \sum_{j=1}^d \sigma^{ij}_t \, dW^j_t \bigg).
\ede
If we denote $\mu+\kappa-r^{l}= (\mu^1+\kappa^{1}-r^{l}, \dots , \mu^d+\kappa^{d}-r^{l})^{\ast}$, then the above equation becomes
\bde
d\wt S_t^{l,\textrm{cld}}=(B_{t}^{l})^{-1}\mathbb{S}_{t}\Big( \big(\mu_t+\kappa_t-r_{t}^{l}\big)\,dt+ \sigma_t \, dW_t \Big).
\ede
We set $a:=\sigma^{-1}(\mu+\kappa-r^{l})$ and we define the probability
measure $\PT^{l}$ equivalent to $\P$ on $(\Omega , {\cal F}^W_T)$ by setting
\bde
\frac{d\PT^{l}}{d\P}=\exp\bigg\{-\int_{0}^{T}a_{t}\, dW_{t}-\frac{1}{2}\int_{0}^{T}|a_{t}|^{2}\, dt\bigg\}.
\ede
From the Girsanov theorem, we obtain
\bde
d\wt S_t^{l,\textrm{cld}} = (B_{t}^{l})^{-1}\mathbb{S}_{t} \sigma_t \, d\wt{W}_t
\ede
where the process $\widetilde{W}:=(\widetilde{W}^{1},\widetilde{W}^{2},\ldots,\widetilde{W}^{d})^{\ast}$, which is
given by
\be \lab{wtw2}
d\widetilde{W}_{t} := dW_{t}+a_{t}\, dt = dW_{t}+\sigma_t^{-1}(\mu_t+\kappa_t-r^{l}_{t})\,dt,
\ee
is a Brownian motion under $\PT^{l}$. Therefore, if the processes $\mu,\, \sigma$ and $\kappa$ are bounded, then the process
$\wt S^{l,\textrm{cld}}$ is a continuous, square-integrable, $(\PT^{l}, \ff^W )$-martingale.
In this sense, the probability measure $\PT^{l}$ is an equivalent martingale measure for the present setup.
Furthermore, $\wt S^{l,\textrm{cld}}$ and $m^l$ satisfy conditions \eqref{gmma1}--\eqref{gmma2} with $Q_t := t$ and
$\gamma_t :=(B^{l}_t)^{-1}\sigma_t$ for all $t \in [0,T]$.
%\bde
%\langle \wt S^{l,\textrm{cld}}\rangle_{t}=\int_{0}^{t}m^{l}_{u}(m_{u}^{l})^{\ast}\,du
%\ede
% with $m^{l}(m^{l})^{\ast}=\mathbb{S}\gamma \gamma \mathbb{S}$ with
Obviously, $m^{l}(m^{l})^{\ast}$ is invertible and thus all conditions in Assumption \ref{changed assumption for lending cumulative dividend price} are satisfied. One should observe that it is natural to identify the processes $\wt S^{l,\textrm{cld}}$  and $\mathbb{S}$ with the processes $M$ and $\XX$ in Section \ref{sect4.2}, respectively.

The following assumption corresponds to Assumptions \ref{mm33} and \ref{assumption 2 for m} (see also Assumption 4.1 in \cite{NR-2014}). Note that it requires the prices of risky assets to be bounded.

\bhyp \label{xx2}
We postulate that: \hfill \break
(i) the process $\wt S^{l,\textrm{cld}} = ( \wt S^{1,l,\textrm{cld}}, \dots , \wt S^{d,l,\textrm{cld}})^*$ is a continuous, square-integrable, $(\PT^l , \gg)$-martingale and has the predictable representation property with respect to the filtration $\gg$ under~$\PT^l$, \hfill \break
(ii) there exists an $\mathbb{R}^{d\times d}$-valued, $\gg$-adapted process $m$ such that \eqref{gmma1} holds
where $m^l(m^l)$ is invertible and there exists a constant $K_m>0$ such that, for all $t\in[0,T]$,
\be \label{mmc2x}
\norm m^l_{t}\norm+\norm(m^l_{t}(m^l_{t})^{\ast})^{-\frac{1}{2}}\norm\leq K_m,
\ee
(iii) the price processes $S^{i},\, i=1,2,\ldots,d$ of risky assets are bounded.
\ehyp

\brem
In the special case where the assets prices are assumed to be uncorrelated, the volatility matrix $\sigma$ is diagonal with the entry $\sigma^{ii}$ denoted as $\sigma^i$, so that the dynamics of the price process of the $i$th risky asset become
\bde
dS^i_t = S^i_t \big( \mu^i_t \, dt +  \sigma^{i}_t \, dW^i_t \big).
\ede
If we postulate that $\mu^i ,\sigma^i$ and $\kappa^i$ are bounded, $\mathbb{F}^W$-adapted processes and the processes $\sigma^i$ are bounded away from zero, specifically, $|\sigma^i_t | > C^i >0$ for all $i$ and $t \in [0,T]$, then Assumption \ref{changed assumption for lending cumulative dividend price} is satisfied.
\erem

%%%%%%%%%%%%%%%%%%%%%%%%%%%%%%%%%%%%%%%%%%%%%%%%%%%%%%%%%%%%%%%%%%%%%%%%%%%%%%%%%%%%%
\subsection{BSDEs for Unilateral Prices under Funding Costs}    \label{sect5.2}
%%%%%%%%%%%%%%%%%%%%%%%%%%%%%%%%%%%%%%%%%%%%%%%%%%%%%%%%%%%%%%%%%%%%%%%%%%%%%%%%%%%%%

We henceforth postulate that condition \eqref{gmma1} holds with $Q_t = t$.
Recall from \cite{BR-2014,NR-2014} that the process $A^{C}:=A+C+F^{C}$ models all cash flows from a collateralized contract $(A,C)$. In particular, the process $F^C$, which represents the cumulative interest of margin account, depends on the adopted collateral convention (see Section 4 in \cite{BR-2014} and, in particular, equation  (2.12) in \cite{NR-2014}).  For brevity, we write
\bde
A^{C,l}_t= \int_0^t (B^l_{u})^{-1}\, dA^C_{u}. 
\ede
We say that a contract $(A,C)$ is {\it admissible under} $\PT^l$ if the process $A^{C,l}$ belongs to the space $\wHzero$
and the random variable $A^{C,l}_{T}$ belongs to $\widehat{L}^{2}_{0}$ under~$\PT^l$.

Let the mapping $\wt{f}_l : \Omega  \times [0,T]  \times \rr \times \rr^{d} \to \rr$ be given by (see equation
(2.16) in \cite{NR-2014})
\be \label{drift function lending}
\wt{f}_l( t, y ,z ): = (B^l_t)^{-1} f_l(t,B^l_t y ,z ) -  r^l_t y
\ee
and $f_l : \Omega  \times [0,T]  \times \rr \times \rr^{d} \to \rr$ equals
\bde
f_l(t, y,z)  :=   \sum_{i=1}^d r^l_t z^i S^i_t
- \sum_{i=1}^d r^{i,b}_t( z^i S^i_t )^+
+   r^l_t \Big( y  + \sum_{i=1}^d ( z^i S^i_t )^- \Big)^+
 - r^b_t \Big( y + \sum_{i=1}^d ( z^i S^i_t )^- \Big)^-  .
\ede
%The mapping $\wt{f}_b : \Omega  \times [0,T]  \times \rr \times \rr^{d} \to \rr$ is given by
%\be \label{drift function borrowing}
%\wt{f}_b( t, y ,z ): = (B^b_t)^{-1} f_b(t, B^b_t y ,z ) -  r^b_t y
%\ee
%and $f_b : \Omega \times [0,T] \times \rr \times \rr^{d} \to \rr$ equals
%\bde
%f_b(t, y,z)  :=   \sum_{i=1}^d r^b_t z^i S^i_t
%- \sum_{i=1}^d r^{i,b}_t( z^i S^i_t )^+
%+  r^l_t \Big( y  + \sum_{i=1}^d ( z^i S^i_t )^- \Big)^+
% - r^b_t \Big( y + \sum_{i=1}^d ( z^i S^i_t )^- \Big)^-  .
%\ede

The following result describes the prices and replicating strategies for the hedger with an initial endowment $x$.
An analogous result holds for his counterparty, that is, the holder of the contract $(-A,-C)$ (see Propositions 4.1 and 4.2 in \cite{NR-2014}, as well as Theorem  \ref{inequality proposition for both positive initial wealth} in the foregoing subsection).

\bt \label{hedger ex-dividend price}
Let either Assumption \ref{changed assumption for lending cumulative dividend price} or Assumption \ref{xx2} be satisfied with $Q_t=t$. Then for any real number $x \geq 0$ and any contract $(A,C)$ admissible under $\PT^l$, the hedger's ex-dividend price satisfies $P^{h}(x,A,C) =  B^l (Y^{h,l,x} - x)  - C$ where $(Y^{h,l,x}, Z^{h,l,x})$ is the unique solution to the BSDE
\begin{equation} \label{BSDE with positive x for hedger}
\left\{
\begin{array}
[c]{l}
dY^{h,l,x}_t = Z^{h,l,x,\ast}_t \, d \wt S^{l,{\textrm{cld}}}_t
+\wt{f}_l \big(t, Y^{h,l,x}_t, Z^{h,l,x}_t \big)\, dt + dA^{C,l}_t, \medskip\\
Y^{h,l,x}_T=x.
\end{array}
\right.
\end{equation}
The hedger's unique replicating strategy $\phi $ can be obtained from  $(Y^{h,l,x}, Z^{h,l,x})$. Specifically,
$\phi$  equals $\phi = \big(\xi^1,\dots ,\xi^d, \psi^{l}, \psi^{b},\psi^{1,b},\dots ,\psi^{d,b}, \eta^b, \eta^l\big)$
where, for every $t\in[0,T]$ and $i=1,2,\ldots,d,$
\bde
\xi^i_{t}= Z^{h,l,x,i}_{t},  \quad
\psi^{i,b}_t =  -(B^{i,b}_t)^{-1} (\xi^i_t S^i_t)^+, \quad
\eta^b_t =-  (B^{C,b}_t)^{-1}C_t^+, \quad
\eta^l_t =(B^{C,l}_t)^{-1} C_t^-,
\ede
and
\begin{align*}
&\psi^{l}_t = (B^l_t)^{-1} \Big( B^l_tY^{h,l,x}_{t} + \sum_{i=1}^d ( \xi^i_t S^i_t )^- \Big)^+, \\
&\psi^{b}_t = - (B^r_t)^{-1} \Big(B^l_tY^{h,l,x}_{t}+ \sum_{i=1}^d ( \xi^i_t S^i_t )^- \Big)^-.
\end{align*}
\et

\proof
It is clear that the function $\wt{f}_l$ can be represented $\wt{f}_l(t,y,z)= g_l( t,y, \mathbb{S}_{t}z)$ where the
function $g_l$ is uniformly Lipschitzian. Furthermore, $\wt{f}_l$ satisfies (\ref{special generatorc}) and thus, in view of condition (ii) in Assumption \ref{changed assumption for lending cumulative dividend price}, it satisfies the uniform $m$-Lipschitz condition. Finally, it is obvious that $\wt{f}_l(t,0,0)=0$ for all $t \in [0,T]$, so that trivially
$\wt{f}_l(\cdot ,0,0)\in \wHzero$. In view of the preceding discussion, we conclude that Theorem \ref{appendix wellposedness theorem for lending special BSDE} can be used to establish the existence and uniqueness of a solution to BSDE \eqref{BSDE with positive x for hedger}.

Consequently, using the solution $(Y^{h,l,x}, Z^{h,l,x})$, we can find the hedger's ex-dividend price $P^{h}(x,A,C)$ and the replicating strategy $\phi$ when the process $A^{C,l}$ represents the discounted cash flows of a collateralized financial contract $(A,C)$ in a market model with funding costs introduced in Section 2.3 of \cite{NR-2014}. For the detailed financial interpretation of each component of the replicating strategy $\phi $, the interested reader is referred to Sections 2.2--2.3 in \cite{NR-2014}.
\endproof

%%%%%%%%%%%%%%%%%%%%%%%%%%%%%%%%%%%%%%%%%%%%%%%%%%%%%%%%%%%%%%%%%%%%%%%%%%%%%%%%%%%%%%%%%%%%%%%
\subsection{The Range of Fair Unilateral Prices} \label{sect5.3}
%%%%%%%%%%%%%%%%%%%%%%%%%%%%%%%%%%%%%%%%%%%%%%%%%%%%%%%%%%%%%%%%%%%%%%%%%%%%%%%%%%%%%%%%%%%%%%%

We conclude this paper by showing that Theorem \ref{comparison theorem 2x} is suitable for studying the bounds for fair or profitable  prices (see Definitions 3.9 and 3.10 in \cite{NR-2014}) of the collateralized contract when the two parties have, possibly different, initial endowments $x_1$ and $x_2$. For the sake of concreteness, we postulate here that the hedger and the counterparty have both non-negative initial endowments, which are denoted as $x_1$ and $x_2$, respectively. For other possible situations, we refer to Propositions 5.2--5.4 in \cite{NR-2014}.

\bt \label{inequality proposition for both positive initial wealth}
Let either Assumption \ref{changed assumption for lending cumulative dividend price} or Assumption \ref{xx2} be satisfied with $Q_t=t$. If the initial endowments satisfy $x_{1}\ge0$ and $x_{2}\ge0$, then for any contract $(A,C)$ admissible under $\PT^l$ we have, for every $t\in[0,T]$,
\be \label{eqq1}
P^{c}_t (x_{2},-A,-C)\leq P^{h}_t (x_{1},A,C). % \quad \PT^l-\aass .
\ee
Hence the range of fair bilateral prices $[P^{c}_t (x_{2},-A,-C),\, P^{h}_t (x_{1},A,C)]$ is non-empty almost surely.
\et

\proof We assume that $x_{1}\ge0$ and $x_{2}\ge0$. From Proposition \ref{hedger ex-dividend price}, we know that $P^{h} (x_{1},A,C) =  B^l (Y^{h,l,x_{1}} - x_{1})  - C$ where $(Y^{h,l,x_{1}}, Z^{h,l,x_{1}})$ is the  unique solution to the BSDE
\be
\left\{
\begin{array}
[c]{l}
dY^{h,l,x_{1}}_t = Z^{h,l,x_{1},\ast}_t \, d \wt S^{l,{\textrm{cld}}}_t
+\wt{f}_l \big(t, Y^{h,l,x_{1}}_t, Z^{h,l,x_{1}}_t \big)\, dt + dA^{C,l}_t, \medskip\\
Y^{h,l,x_{1}}_T=x_{1}. \nonumber
\end{array}
\right.
\ee
Using similar arguments, but applied to $(x_2,-A,-C)$, we show that the counterparty's price equals $P^{c}(x_{2},-A,-C) =- (B^l (Y^{c,l,x_{2}} - x_{2})+C )$ where $(Y^{c,l,x_{2}}, Z^{c,l,x_{2}})$ is the unique solution to the BSDE
\be
\left\{
\begin{array}
[c]{l}
dY^{c,l,x_{2}}_t = Z^{c,l,x_{2},\ast}_t \, d \wt S^{l,{\textrm{cld}}}_t
+\wt{f}_l \big(t, Y^{c,l,x_{2}}_t, Z^{c,l,x_{2}}_t \big)\, dt - dA^{C,l}_t, \medskip\\
Y^{c,l,x_{2}}_T=x_{2}.\nonumber
\end{array}
\right.
\ee
Therefore, to prove \eqref{eqq1}, it suffices to establish the following
inequality
\bde
-B^l_t (Y^{c,l,x_{2}}_t - x_{2}) - C_t \leq B^l_t (Y^{h,l,x_{1}}_t - x_{1})  - C_t ,
\ede
which is manifestly equivalent to $-Y^{c,l,x_{2}}_t +  x_{2} \leq Y^{h,l,x_{1}}_t - x_{1}$.
If we denote $\bar{Y}^{h,l,x_{1}}:=Y^{h,l,x_{1}} - x_{1}$ and $\bar{Z}^{h,l,x_{1}}=Z^{h,l,x_{1}}$, then the pair $(\bar{Y}^{h,l,x_{1}},\bar{Z}^{h,l,x_{1}})$ is the unique solution of the following BSDE
\be\label{transferred BSDE for hedger 1}
\left\{
\begin{array}
[c]{l}
d\bar{Y}^{h,l,x_{1}}_t = \bar{Z}^{h,l,x_{1},\ast}_t \, d \wt S^{l,{\textrm{cld}}}_t
+\wt{f}_l \big(t, \bar{Y}^{h,l,x_{1}}_t+x_{1}, \bar{Z}^{h,l,x_{1}}_t \big)\, dt + dA^{C,l}_t, \medskip\\
\bar{Y}^{h,l,x_{1}}_T=0.
\end{array}
\right.
\ee
Similarly, $(\bar{Y}^{c,l,x_{2}}, \bar{Z}^{c,l,x_{2}}):=\big(-Y^{c,l,x_{2}}+x_{2},\, \bar{Z}^{c,l,x_{2}}_t=-Z^{c,l,x_{2}}\big)$ is the unique solution of the  BSDE
\be\label{transferred BSDE for counterparty 1}
\left\{
\begin{array}
[c]{l}
d\bar{Y}^{c,l,x_{2}}_t = \bar{Z}^{c,l,x_{2},\ast}_t \, d \wt S^{l,{\textrm{cld}}}_t
-\wt{f}_l \big(t, -\bar{Y}^{c,l,x_{2}}_t+x_{2}, -\bar{Z}^{c,l,x_{2}}_t \big)\, dt + dA^{C,l}_t , \medskip\\
\bar{Y}^{c,l,x_{2}}_T=0.
\end{array}
\right.
\ee
Note that (\ref{transferred BSDE for hedger 1}) and (\ref{transferred BSDE for counterparty 1}) have
the same term $dA^{C,l}_t$ and the same terminal condition $\eta = 0$. Also, we already know from the preceding subsection that the
generator $\wt{f}_l$ satisfies the conditions of the comparison Theorem \ref{comparison theorem 2x}.
Hence if either (see Theorem \ref{comparison theorem 2x})
\be \label{inequality for the lending drivers 1}
-\wt{f}_l \big(t, \bar{Y}^{h,l,x_{1}}_t+x_{1}, \bar{Z}^{h,l,x_{1}}_t \big)
\ge\wt{f}_l \big(t, -\bar{Y}^{h,l,x_{1}}_t+x_{2}, -\bar{Z}^{h,l,x_{1}}_t \big), \quad \PT^l\otimes \Leb-\aaee
\ee
or (see Remark \ref{remark for comparison theorem 2})
\be \label{inequality for the lending drivers 2}
-\wt{f}_l \big(t, \bar{Y}^{c,l,x_{2}}_t+x_{1}, \bar{Z}^{c,l,x_{2}}_t \big)
\ge\wt{f}_l \big(t, -\bar{Y}^{c,l,x_{2}}_t+x_{2}, -\bar{Z}^{c,l,x_{2}}_t \big), \quad \PT^l\otimes \Leb-\aaee
\ee
then the inequality $\bar{Y}^{h,l,x_{1}}\ge\bar{Y}^{c,l,x_{2}}$ holds $\PT^l\otimes \Leb$-a.e. To establish both
\eqref{inequality for the lending drivers 1} and \eqref{inequality for the lending drivers 2}, it suffices to
show that
\be \label{inequality for the lending drivers 3}
-\wt{f}_l \big(t, y+x_{1}, z \big)
\ge \wt{f}_l \big(t, -y+x_{2}, -z \big)\  \text{ for all } (y,z)\in \rr \times \rr^{d}, \quad \PT^l\otimes \Leb-\aaee
\ee
To complete the proof of the theorem, it suffices to note that the elementary inequality \eqref{inequality for the lending drivers 3} holds, as shown in Lemma \ref{lemnew1} below.
\endproof

\bl \label{lemnew1}
Assume that $x_{1}\ge0$ and $x_{2}\ge0$. Then the mapping $\wt{f}_l : \Omega \times [0,T] \times \mathbb{R}\times\mathbb{R}^{d} \to \mathbb{R}$ given by equation \eqref{drift function lending} satisfies \eqref{inequality for the lending drivers 3}.
%\be
%-\wt{f}_l \big(t, y+x_{1}, z \big)
%\ge \wt{f}_l \big(t, -y+x_{2}, -z \big)\  \text{ for all } (y,z)\in \mathbb{R}\times\mathbb{R}^{d}, \quad \PT^l\otimes \Leb-\aaee
%\ee
\el

\proof %Recall that for all $(y,z)\in \mathbb{R}\times\mathbb{R}^{d}$
%\bde
%\wt{f}_l( t, y ,z ) = (B^l_t)^{-1} f_l(t,B^l_t y ,z ) -  r^l_t y
%\ede
%where
%\bde
%f_l(t, y,z)  =   \sumik_{i=1}^d r^l_t z^i S^i_t
%- \sumik_{i=1}^d r^{i,b}_t( z^i S^i_t )^+
%+   r^l_t \Big( y  + \sumik_{i=1}^d ( z^i S^i_t )^- \Big)^+
% - r^b_t \Big( y + \sumik_{i=1}^d ( z^i S^i_t )^- \Big)^-  .
%\ede
Let us denote $\zzti = (B^l_t)^{-1} z^i S^i_t$. Then
\be
\begin{array}
[c]{ll}
\delta &:=\wt{f}_l \big(t, y+x_{1}, z \big)+\wt{f}_l \big(t, -y+x_{2}, -z \big)\medskip\\
&=- r^l_t (y+x_{1})+ f_l(t,B^l_t (y+x_{1}), z)- r^l_t (-y+x_{z})+ f_l(t,B^l_t (-y+x_{2}),-z)\medskip\\
&=- r^l_t (x_{1}+x_{2})-\sum_{i=1}^d r^{i,b}_t|\zzti |+r^l_t (\delta_{1}^{+}+\delta_{2}^{+})-r^b_t (\delta_{1}^{-}+\delta_{2}^{-})  \nonumber
\end{array}
\ee
where we denote
\be
\delta_{1}:= y+ x_{1}+\sumik_{i=1}^d (\zzti)^{-}, \quad
\delta_{2}:=- y+ x_{2}+\sumik_{i=1}^d (-\zzti)^{-}.\nonumber
\ee
From $r^l\leq r^b$, we have
\bde
\begin{array}
[c]{ll}
\delta&=- r^l_t (x_{1}+x_{2})-\sum_{i=1}^d r^{i,b}_t|\zzti |+r^l_t (\delta_{1}^{+}+\delta_{2}^{+})-r^b_t (\delta_{1}^{-}+\delta_{2}^{-})\medskip\\
& \leq - r^l_t (x_{1}+x_{2})-\sum_{i=1}^d r^{i,b}_t|\zzti|+r^l_t (\delta_{1}+\delta_{2})\medskip\\
&=- r^l_t (x_{1}+x_{2})-\sum_{i=1}^d r^{i,b}_t|\zzti|+r^l_t (x_{1}+x_{2})+\sum_{i=1}^d r^l_t|\zzti|\medskip\\
&=\sum_{i=1}^d (r^l_t-r^{i,b}_t)|\zzti|\leq 0.\nonumber
\end{array}
\ede
We thus conclude that $\delta\leq0$, so that (\ref{inequality for the lending drivers 3}) is valid.
\endproof

%%%%%%%%%%%%%%%%%%%%%%%%%%%%%%%%%%%%%%%%%%%%%%%%%%%%%%%%%%%%%%%%%%%%%%%%%%%%%
\vskip 10 pt
%%%%%%%%%%%%%%%%%%%%%%%%%%%%%%%%%%%%%%%%%%%%%%%%%%%%%%%%%%%%%%%%%%%%%%%%%%%%%

\noindent {\bf Acknowledgement.}
The research of Tianyang Nie and Marek Rutkowski was supported under Australian Research Council's
Discovery Projects funding scheme (DP120100895).

%%%%%%%%%%%%%%%%%%%%%%%%%%%%%%%%%%%%%%%%%%%%%%%%%%%%%%%%%%%%%%%%%%%%%%%%%%%%%

%%%%%%%%%%%%%%%%%%%%%%%%%%%%%%%%%%%%%%%%%%%%%%%%%%%%%%%%%%%%%%%%%%%%%%%%%%%%%%%%%%%%%%%%%%%%%%%%%%%%
%%%%%%%%%%%%%%%%%%%%%%%%%%%%%%%%%%%%%%     REFERENCES      %%%%%%%%%%%%%%%%%%%%%%%%%%%%%%%%%%%%%%%%%
%%%%%%%%%%%%%%%%%%%%%%%%%%%%%%%%%%%%%%%%%%%%%%%%%%%%%%%%%%%%%%%%%%%%%%%%%%%%%%%%%%%%%%%%%%%%%%%%%%%%

% \newpage

\end{document}